\documentclass[leqno]{article}
\usepackage{beamerarticle} 
\usepackage{geometry}      \usepackage{amsthm}        

\usepackage[T2A]{fontenc}
\usepackage[utf8]{inputenc}
\usepackage[english,russian]{babel}

\ifdefined\isenglish\else\def\isenglish{1}\fi

\if\isenglish1
  \AtBeginDocument{\selectlanguage{english}}
\else
  \AtBeginDocument{\selectlanguage{russian}}
\fi

\ifdefined\whichmode\else\def\whichmode{1}\fi

\newcommand{\mytitle}{Pointed Gromov--Hausdorff Distance between $l^p$-Wedge Sums and Isometric Embeddings of $l^{\infty}$}

\newcommand{\myabstract}{
The paper studies $l^{p}$-wedge sums of metric spaces and provides a two-sided estimate for the pointed Gromov--Hausdorff distance between them. Using an upper bound for the Gromov–Hausdorff distance, the existence of an isometric embedding of $l^{\infty}$ into the Gromov--Hausdorff space $\cGH$ of all non-empty metric spaces is proved.

Bibliography: 15 titles.
}

\newcommand{\myname}{E.~Abdullaev}

\newcommand{\mydate}{\ifnum\day<10 0\fi\the\day.\ifnum\month<10 0\fi\the\month.\the\year}

\usepackage{mathtools} \usepackage{amsfonts,amssymb,mathrsfs,amscd}
\usepackage{stmaryrd}

\usepackage{graphicx}
\usepackage{verbatim}
\usepackage{url}
\usepackage{alltt}
\usepackage{indentfirst}
\usepackage{qrcode}

\theoremstyle{plain}

\newtheorem{thm}{Theorem}[section]
\newtheorem{prop}[thm]{Proposition}
\newtheorem{cor}[thm]{Corollary}
\newtheorem{ass}[thm]{Assertion}
\newtheorem{lem}[thm]{Lemma}

\theoremstyle{definition}

\newtheorem{dfn}[thm]{Definition}
\newtheorem{rk}[thm]{Remark}

\newtheorem*{notation}{Notation}

\renewcommand{\a}{\alpha}
\newcommand{\bt}{\beta}
\newcommand{\dl}{\delta}
\newcommand{\D}{\Delta}
\newcommand{\g}{\gamma}
\newcommand{\e}{\varepsilon}
\newcommand{\vk}{\varkappa}

\newcommand{\cGH}{\mathcal{GH}}

\newcommand{\cR}{\mathcal{R}}
\newcommand{\cM}{\mathcal{M}}

\newcommand{\cP}{\mathcal{P}}

\newcommand{\N}{\mathbb{N}}

\newcommand{\R}{\mathbb{R}}

\newcommand{\diam}{{\operatorname{diam}}}
\newcommand{\Rad}{{\operatorname{Rad}}}
\newcommand{\dis}{{\operatorname{dis}}}
\newcommand{\codis}{{\operatorname{codis}}}

\renewcommand{\:}{\colon}
\newcommand{\rom}[1]{{\em #1}}

\makeatletter
\@ifundefined{RBibitem}{
\def\RBibitem#1{\normalfont\bibitem{#1}\ignorespaces}

\def\by{\unskip\normalfont~}
  \def\paper{\unskip\normalfont. }
  \def\jour{\unskip\normalfont, \itshape }
  \def\publaddr{\unskip\normalfont, }
  \def\publ{\unskip\normalfont, }
  \def\pages{\unskip\normalfont, pp.~}

\def\inbook{\unskip\normalfont\ \itshape }
  \def\eds{\unskip\normalfont, ed. by }
  \def\book{\unskip\normalfont. \itshape }
  \def\issue{\unskip\normalfont, no.~}

\def\yr{\unskip, \normalfont}
  \def\vol{\unskip, \normalfont\bfseries}

  \def\bysame{\by\hbox to3em{\hrulefill}\thinspace\kern\z@}
  \def\preprint{\unskip\normalfont, preprint }

  \def\finalinfo{\unskip\normalfont, }
  \def\@@finalinfo{\finalinfo}

\def\morerref{\unskip\spacefactor=1000 \newblock}

}{}
\makeatother

\usepackage{hyperref}      
\begin{document}
\title{\mytitle}
\author{\myname}
\date{\today}

\maketitle

\begin{abstract}
\myabstract
\end{abstract}

\ifnum\whichmode=1
\tableofcontents
\fi

\section*{Introduction}

The concept of distance between metric spaces, introduced by F.~Hausdorff~\cite{Hausdorff} and generalized in the works of D.~Edwards~\cite{Edwards} and M.~Gromov~\cite{Gromov}, is one of the fundamental tools of modern metric geometry.
On the class of all metric compact spaces, the Gromov-Hausdorff distance is a metric that turns this class into a complete separable geodesic metric space.
The fundamental properties of this distance and the geometry of the Gromov-Hausdorff space are being actively studied (see, e.g.,~\cite{BurBurIva, IvaNikTuz, NP01, BogatyTuzhilin, IvaTuzSimplex}).

Despite being deeply studied theoretically, the exact computation of the Gromov-Hausdorff distance in explicit form is an extremely difficult problem.
The complexity is due to the necessity of considering all possible correspondences between the spaces.
From a computational point of view, the problem of finding the Gromov-Hausdorff distance for finite metric spaces belongs to the class of $NP$-hard problems~\cite{NP01, NP02}.
To date, the exact values of the distance are known only for very narrow classes of spaces, such as segments and circles, spheres of fixed dimensions, as well as discrete metric spaces of a special type (metric simplices)~\cite{IvaTuzSimplex, JiTuzhilin, GrigIvaTuz}.

In view of these difficulties, finding effective two-sided bounds for the Gromov-Hausdorff distance between spaces with a specific geometric structure is an important problem.
Of particular interest is the study of composite spaces obtained using standard set-theoretic and topological operations.
In~\cite{Me_GH_Product_2026} the author investigated $l^p$-products of metric spaces and obtained bounds for the Gromov-Hausdorff distance between them.
A natural development of this research direction is the consideration of $l^p$-wedge sums (one-point unions) of pointed metric spaces, where the metric is consistent with the geometry of the spaces relative to the chosen base points.

It is worth noting that the distance between pointed spaces (spaces with a base point) has various modifications depending on the conditions imposed on the preservation of the base points.
In this paper, we use an approach in which correspondences are required to link the base points; such a definition is consistent with the general concept of the Gromov-Hausdorff distance for $F$-spaces introduced in~\cite{Me_DGH_2026}.

This paper is devoted to obtaining two-sided bounds for the pointed Gromov-Hausdorff distance between $l^p$-wedge sums of metric spaces. The paper is organized as follows.
Section 1 provides the basic definitions and auxiliary statements.
Section 2 considers the general case: Theorem~\ref{thm:GH_upper_bound} proves an upper bound for the distance between arbitrary $l^p$-wedge sums in terms of the distances between their components, and Theorem~\ref{thm:GH_lower_bound} establishes a corresponding lower bound depending on the diameters of the components.
Finally, Section 3 investigates an important special case where the compared wedge sums are constructed from the same family of unit radius spaces by scaling (stretching) their components using different sets of non-negative coefficients.
In Theorem~\ref{thm:linear} an upper bound for the Gromov-Hausdorff distance between such spaces is obtained, expressed in terms of these coefficients and the diameters of the original spaces.

Section 4 is devoted to another problem --- the isometric embedding of metric spaces into the Gromov--Hausdorff space $\cGH$ of all non-empty metric spaces (unlike the previous sections, here we consider ordinary metric spaces rather than pointed ones). The question regarding the possibility of such an embedding has its own history. It is well known that any bounded metric space can be isometrically embedded into $\cGH$; this classical result belongs to A.~O.~Ivanov and A.~A.~Tuzhilin~\cite{IvaTuz22}. On the other hand, a recent preprint~\cite{Fukano} considered embeddings of an arbitrary compact metric space into the space of compact metric spaces. The present paper does not overlap with these works and is devoted to a fundamentally different problem --- the embedding of the unbounded space $l^{\infty}$ into $\cGH$. Using the theory developed in Section 2, we construct an explicit isometric embedding of the auxiliary space $\cP_{00}(\R_{+})$ of all non-empty subsets of $\R_{+}$, containing 0, into $\cGH$. Since there exists an embedding $l^{\infty} \to \cP_{00}(\R_{+})$, which we also establish, this guarantees the existence of the desired isometric embedding of $l^{\infty}$ into $\cGH$.

\subsection*{Acknowledgments}
\begin{sloppypar}
The author expresses gratitude to his scientific advisor, Doctor of Physical and Mathematical Sciences, Professor \mbox{A.~A.~Tuzhilin}, and Doctor of Physical and Mathematical Sciences, Professor \mbox{A.~O.~Ivanov}, for formulating the problem and their constant attention to the work.
\end{sloppypar}

\section{Main definitions and preliminary results}

\subsection{Gromov--Hausdorff distance for metric spaces}

Let $\cGH$ be the class of all nonempty metric spaces, and $\cM \subset \cGH$ be the class of all nonempty metric compacts. For arbitrary $X, Y \in \cGH$ let $d_X$ and $d_Y$ denote their metrics.

Let $R \subset X \times Y$ be an arbitrary nonempty subset.
Define the \emph{distortion}
$$\dis(R) = \sup_{\substack{(x, y) \in R \\ (\tilde{x}, \tilde{y}) \in R}} \bigl| d_{X}(x, \tilde{x}) - d_{Y}(y, \tilde{y}) \bigr|.$$
In particular, for an arbitrary map $f \: X \to Y$ we define its distortion as the distortion of its graph:
$$\dis(f) = \sup_{x, \tilde{x} \in X} \Bigl| d_{X}(x, \tilde{x}) - d_{Y}\bigl(f(x), f(\tilde{x})\bigr)\Bigr|.$$

For a pair of maps $f_1 \: X \to Y, \ f_2 \: Y \to X$ we define their \emph{codistortion}
$$\codis(f_1, f_2) = \sup_{\substack{x \in X \\ y \in Y}} \Bigl| d_{X}\bigl(x, f_2(y)\bigr) - d_{Y}\bigl(y, f_1(x)\bigr)\Bigr|.$$

A \emph{correspondence} between $X$ and $Y$ is a subset $R \subset X \times Y$ whose projections onto $X$ and $Y$ are surjective. The set of all correspondences between $X$ and $Y$ will be denoted by $\cR(X, Y)$.
The Gromov--Hausdorff distance can be defined as
$$d_{GH}(X, Y) = \frac{1}{2} \inf_{R \in \cR(X, Y)} \dis(R).$$

In what follows, when discussing the properties of the Gromov--Hausdorff distance, we follow \cite{BurBurIva}, \cite{BogatyTuzhilin}.
The following proposition provides an equivalent definition of the Gromov--Hausdorff distance.

\begin{prop}
For any $X, Y \in \cGH$ it holds that
$$d_{GH}(X, Y) = \frac{1}{2} \inf_{\substack{f_1 \: X \to Y \\ f_2 \: Y \to X}} \max \bigl\{ \dis(f_1), \dis(f_2), \codis(f_1, f_2) \bigr\},$$
where the infimum is taken over all maps $f_1, f_2$.
\end{prop}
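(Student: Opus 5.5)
The plan is to prove the two inequalities separately. Denote the right-hand side by $\frac12 D$, where
$$D = \inf_{f_1, f_2} \max\{\dis(f_1), \dis(f_2), \codis(f_1,f_2)\}.$$

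\emph{Step 1: $d_{GH}(X,Y) \le \frac12 D$.} Given maps $f_1\: X\to Y$ and $f_2\: Y\to X$, form the union of graphs
$$R = \{(x, f_1(x)) : x\in X\} \cup \{(f_2(y), y) : y \in Y\}.$$
Both projections of $R$ are surjective, so $R \in \cR(X,Y)$. Take two pairs from $R$ and bound $|d_X - d_Y|$ in three cases.
\begin{itemize}
\item If both pairs come from the graph of $f_1$, the difference is at most $\dis(f_1)$.
\item If both come from the graph of $f_2$, it is at most $\dis(f_2)$.
\item If one pair is $(x, f_1(x))$ and the other is $(f_2(y), y)$, the difference is $\bigl|d_X(x, f_2(y)) - d_Y(f_1(x), y)\bigr| \le \codis(f_1,f_2)$.
\end{itemize}
Hence $\dis(R) \le \max\{\dis(f_1),\dis(f_2),\codis(f_1,f_2)\}$. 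Taking the infimum over all pairs $(f_1,f_2)$ gives the inequality.

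\emph{Step 2: $\frac12 D \le d_{GH}(X,Y)$.} Fix an arbitrary correspondence $R \in \cR(X,Y)$. Using the axiom of choice and the surjectivity of both projections, choose $f_1\: X\to Y$ with $(x, f_1(x)) \in R$ for every $x$, and $f_2\: Y\to X$ with $(f_2(y), y) \in R$ for every $y$. Then:
\begin{itemize}
\item $\dis(f_1) \le \dis(R)$ and $\dis(f_2) \le \dis(R)$, since each graph is contained in $R$.
\item For the codistortion, the pairs $(x, f_1(x))$ and $(f_2(y), y)$ both lie in $R$, so $\bigl|d_X(x, f_2(y)) - d_Y(f_1(x), y)\bigr| \le \dis(R)$, i.e. $\codis(f_1,f_2) \le \dis(R)$.
\end{itemize}
Thus $D \le \dis(R)$ for every correspondence $R$. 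Taking the infimum over $R$ gives $D \le 2\,d_{GH}(X,Y)$.

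Combining the two steps gives equality. There is no real obstacle. The only points needing care are the mixed case in Step 1, which is exactly where $\codis$ is used, and the use of choice in Step 2. The argument also works verbatim when some of the suprema are infinite, i.e. for unbounded spaces where $d_{GH}$ may equal $\infty$, because every comparison above is an inequality in $[0,\infty]$.
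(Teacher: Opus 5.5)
Your proof is correct. The union of the graph of $f_1$ with the transposed graph of $f_2$ gives $d_{GH}(X,Y)\le\frac12 D$, with the mixed pairs controlled exactly by $\codis(f_1,f_2)$. Selecting sections of a correspondence gives the reverse inequality.

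The paper states this proposition without proof, taking it from the cited literature. Your argument is the standard one found there, and it remains valid when both sides equal $\infty$.
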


For any metric space $X$, let us define its \emph{diameter}:
$$\diam(X) = \sup_{x, y \in X} d_{X}(x, y).$$

\begin{prop}
For any bounded metric spaces $X, Y$, the following holds\/\rom:
$$\bigl| \diam(X) - \diam(Y) \bigr| \le 2d_{GH}(X, Y) \le \max\bigl\{ \diam(X), \ \diam(Y) \bigr\}.$$
\end{prop}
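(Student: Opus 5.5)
Both inequalities come straight from the correspondence definition $d_{GH}(X,Y)=\frac12\inf_{R\in\cR(X,Y)}\dis(R)$, so I will argue with correspondences throughout and not use the map formulation from the preceding proposition.

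For the lower bound, I fix an arbitrary correspondence $R\in\cR(X,Y)$ and aim to show $\diam(X)\le \diam(Y)+\dis(R)$. Take any $x,\tilde x\in X$. Since $R$ projects surjectively onto $X$, there are $y,\tilde y\in Y$ with $(x,y),(\tilde x,\tilde y)\in R$. Then
$$d_X(x,\tilde x)\le d_Y(y,\tilde y)+\bigl|d_X(x,\tilde x)-d_Y(y,\tilde y)\bigr|\le \diam(Y)+\dis(R).$$
Taking the supremum over $x,\tilde x$ gives the claim. Exchanging the roles of $X$ and $Y$, which is legitimate because $R$ also projects onto $Y$, gives $|\diam(X)-\diam(Y)|\le\dis(R)$. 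Taking the infimum over $R$ then yields $|\diam X-\diam Y|\le 2d_{GH}(X,Y)$.

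For the upper bound, I use the full correspondence $R=X\times Y$, which is trivially a correspondence. For any $(x,y),(\tilde x,\tilde y)\in R$ both distances are non-negative, and $d_X(x,\tilde x)\le\diam X$, $d_Y(y,\tilde y)\le\diam Y$. For any $a\in[0,A]$ and $b\in[0,B]$ we have $|a-b|\le\max\{a,b\}\le\max\{A,B\}$. Hence $\dis(X\times Y)\le\max\{\diam X,\diam Y\}$, and therefore $2d_{GH}(X,Y)\le\dis(X\times Y)\le\max\{\diam X,\diam Y\}$.

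Boundedness of $X$ and $Y$ is used only so that both diameters are finite, which keeps the differences above meaningful. There is no genuine obstacle. The only point needing care is in the lower bound: it must be stated symmetrically and hold for every correspondence before the infimum is taken.
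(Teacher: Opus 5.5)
Your proof is correct and is the standard argument. The paper gives no proof of this proposition; it quotes it as a known property from the cited references (Burago--Burago--Ivanov, Bogaty--Tuzhilin), where it is proved exactly as you do: the lower bound by comparing diameters through an arbitrary correspondence, and the upper bound from the trivial correspondence $X\times Y$, which is a valid correspondence because all spaces in $\cGH$ are nonempty.
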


\subsection{Gromov--Hausdorff distance between pointed metric spaces}

\begin{dfn}
We call a metric space $(X, d_{X})$ with a chosen base point $x_0$\emph{ a pointed metric space}, and we denote it by $(X, d_{X}, x_0)$ or $(X, x_0)$.
\end{dfn}

In~\cite{Me_DGH_2026} the concept of an $F$-space was introduced as a set with a given family of generalized pseudometrics and base points, and the Gromov--Hausdorff distance for $F$-spaces was defined. The following definition of the Gromov--Hausdorff distance is consistent with that definition and is a special case of it.

\begin{dfn}
Let $(X, x_0), (Y, y_0)$ be pointed metric spaces. We then introduce the \emph{pointed Gromov--Hausdorff distance}
$$d_{pGH}\bigl((X,x_0),(Y,y_0)\bigr)=\frac{1}{2} \inf_{R \in \cR \left((X,x_0),(Y,y_0)\right)} \dis(R), $$ where
$$\cR ((X, x_0), (Y, y_0)) = \Bigl\{ R \in \cR(X,Y) \ \big| \ (x_0,y_0) \in R \Bigr\}.$$
\end{dfn}

For any pointed metric space $(X, x_0)$, we define its \emph{radius}:
$$\Rad(X, x_0) = \sup_{x \in X} d_{X}(x, x_0).$$

\begin{prop}[\cite{Me_DGH_2026}]
For any bounded pointed metric spaces $(X, x_0), (Y, y_0)$ the following holds\/\rom:
$$\bigl| \Rad(X, x_0) - \Rad(Y, y_0) \bigr| \le 2d_{pGH}\bigl((X, x_0), (Y, y_0)\bigr) \le \max\bigl\{ \diam(X), \ \diam(Y) \bigr\}.$$
\end{prop}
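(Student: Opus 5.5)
To prove the two inequalities, I will work directly from the definition of $d_{pGH}$ through correspondences containing the pair of base points. I treat the two bounds separately. In both cases I only need elementary estimates on $\dis(R)$ for a suitable $R \in \cR\bigl((X,x_0),(Y,y_0)\bigr)$.

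\emph{Lower bound.} Fix an arbitrary $R \in \cR\bigl((X,x_0),(Y,y_0)\bigr)$ and an arbitrary $x \in X$. Since the projection of $R$ onto $X$ is surjective, there is $y \in Y$ with $(x,y) \in R$. Because $(x_0,y_0) \in R$ as well, the definition of distortion gives
$$d_X(x,x_0) \le d_Y(y,y_0) + \dis(R) \le \Rad(Y,y_0) + \dis(R).$$
Taking the supremum over $x$ yields $\Rad(X,x_0) \le \Rad(Y,y_0) + \dis(R)$. The symmetric argument, using surjectivity onto $Y$, gives the reverse inequality, so $\bigl|\Rad(X,x_0) - \Rad(Y,y_0)\bigr| \le \dis(R)$. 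Passing to the infimum over $R$ gives the left inequality. Boundedness guarantees that the radii are finite, so the subtraction makes sense.

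\emph{Upper bound.} I take the full correspondence $R = X \times Y$, which obviously contains $(x_0,y_0)$. For any pairs $(x,y), (\tilde x,\tilde y) \in R$ both numbers $d_X(x,\tilde x)$ and $d_Y(y,\tilde y)$ lie in $[0, \max\{\diam X, \diam Y\}]$. Hence their difference has absolute value at most $\max\{\diam X, \diam Y\}$, so $\dis(R) \le \max\{\diam X,\diam Y\}$. It follows that $2d_{pGH} \le \dis(R)$ is bounded by the same quantity.

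There is no real obstacle here. The only point requiring care is the lower bound: the estimate must pair $x$ with the base point $x_0$, and this is exactly where the requirement $(x_0,y_0)\in R$ is used. That same requirement is why the radius, rather than the diameter, appears in the pointed version.
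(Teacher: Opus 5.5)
Your proof is correct: the lower bound pairs each point with a partner in $R$ and compares distances to the base points using $(x_0,y_0)\in R$, and the upper bound takes the full correspondence $X\times Y$. This is the standard pointed analogue of the classical diameter estimates for $d_{GH}$. The paper states this result without proof, citing \cite{Me_DGH_2026}, so there is no in-paper argument to compare against, and your argument gives a complete proof.
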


\subsection{Products and wedge sums of metric spaces}

\begin{dfn} Let $\{(X_{n}, x_n, d_{X_{n}})\}_{n=1}^{\infty}$ be a sequence of pointed metric spaces, such that
$$\sum_{n=1}^{\infty}(\diam(X_n))^{p} < \infty,$$
where $1 \le p < \infty$. Then on the Cartesian product $\prod_{n=1}^{\infty} X_{n}$ we introduce the metric
$$ d_{X}\bigl(\{x_n\}_{n=1}^{\infty}, \{y_n\}_{n=1}^{\infty} \bigr) = \Biggl(\sum_{n = 1}^{\infty} \bigl(d_{X_{n}}(x_{n}, y_{n}) \bigr)^{p} \Biggr)^{1/p},$$
we also set $x_0 = \{x_{n}\}_{n=1}^{\infty}$ and denote the obtained pointed metric space $(X, d_{X}, x_{0})$ by $(l^{p})\prod_{n} X_{n}$.
\end{dfn}

\begin{dfn}
Let $\{(X_{\a}, x_{\a})\}_{\a\in A}$ be an arbitrary family of pointed metric spaces, such that $\sup_{\alpha \in A} \diam(X_{\alpha})<\infty$.
Then on the Cartesian product $\prod_{\a \in A} X_{\a}$ we introduce the metric
$$ d_{X}\bigl(\{x_{\alpha}\}_{\alpha \in A}, \{y_{\alpha}\}_{\alpha \in A} \bigr) = \sup_{\alpha \in A} d_{X_{\a}}(x_{\alpha}, y_{\alpha}),$$
we also set $x_0 = \{x_{\a}\}_{\a \in A}$ and denote the obtained pointed metric space $(X, d_{X}, x_{0})$ by $(l^{\infty})\prod_{\alpha \in A} X_{\alpha}$.
\end{dfn}

\begin{dfn}
Let $\{(X_{\a},x_{\a})\}_{\a \in A}$, $\{(Y_{\a},y_{\a})\}_{\a \in A}$ be families of pointed metric spaces.
Let
$$X = \prod_{\a \in A} X_{\a}, \quad Y = \prod_{\a \in A} Y_{\a},$$
where the metrics on $X$ and $Y$ are chosen arbitrarily.
For given correspondences $R_{\a} \in \cR(X_{\a}, Y_{\a})$
consider their product (as a product of sets)
$$R = \prod_{\a \in A} R_{\a}$$
and consider it as a correspondence $R \in \cR(X, Y).$
We will call the obtained correspondence the \emph{product of correspondences}.
If we want to emphasize that the correspondence is considered specifically in the $l^p$-product, we will write
$$(l^p) \prod_{\a \in A} R_{\a} \in \cR\biggl( (l^p) \prod_{\a \in A} (X_{\a}, x_{\a}), \ (l^p) \prod_{\a \in A} (Y_{\a}, y_{\a}) \biggr).$$
\end{dfn}

\begin{dfn}
Let $\{(X_{\a}, d_{X_{\a}}, x_{\a})\}_{\a \in A}$ be a family of pointed metric spaces.
For any $1 \le p \le \infty$ we define their \emph{wedge sum}
$$X = (l^p) \bigvee_{\a \in A} (X_{\a},x_{\a}),$$
in which all points $x_{\a}$ are identified and on which the metric is defined as follows:
$$d_{X}(x,y ) =
\begin{cases}
d_{X_{\a}}(x, y), & \text{ if } x, y \in X_{\a}, \ \a \in A, \\
\Bigl(\bigl(d_{X_{\a}}(x,x_{\a})\bigr)^p+\bigl(d_{X_{\bt}}(y,x_{\bt})\bigr)^p\Bigr)^{1/p}, & \text{ if } x \in X_{\a}, \ y \in X_{\bt}, \ \a \ne \bt, \ p \ne \infty, \\
\max\bigl\{ d_{X_{\a}}(x, x_{\a}), \ d_{X_{\bt}}(y, x_{\bt})\bigr\}, & \text{ if } x \in X_{\a}, \ y \in X_{\bt}, \ \a \ne \bt, \ p = \infty.
\end{cases}
$$
\end{dfn}

\begin{rk}
In the previous definition, there is no need to verify the triangle inequality for the wedge sum, since it is satisfied automatically.
Indeed, let
$$x, y, z \in X = (l^{p}) \bigvee_{\a \in A} (X_{\a}, x_{\a}),$$ then for some $\alpha, \beta, \gamma$ it must be that $x \in X_{\alpha}, \ y \in X_{\beta}, \ z \in X_{\gamma}$, and then the triangle inequality follows from the fact that the wedge sum can be considered as a subspace of the product:
$$(l^{p}) \bigvee_{\dl \in \{\alpha, \beta, \gamma\}} (X_{\dl}, x_{\dl}) \subset (l^{p}) \prod_{\dl \in \{\alpha, \beta, \gamma\}} (X_{\dl}, x_{\dl}).$$
\end{rk}

\begin{dfn}
For arbitrary families of pointed metric spaces $\{(X_{\a},x_{\a})\}_{\a \in A}$ and $\{(Y_{\a},y_{\a})\}_{\a \in A}$ and for given correspondences $R_{\a}\in \cR((X_{\a}, x_{\a}), (Y_{\a},y_{\a}))$ we define the \emph{wedge sum of correspondences} by analogy with the product of correspondences:
$$R=\bigvee_{\a \in A}R_{\a}:=\bigcup_{\a \in A}R_{\a} \in \cR \biggl( \bigvee_{\a \in A} (X_{\a}, x_{\a}), \bigvee_{\a \in A}(Y_{\a}, y_{\a}) \biggr).$$
To emphasize that the correspondence is considered specifically as a correspondence in the $l^p$-wedge sum, we will write
$$R = (l^{p})\bigvee_{\a \in A} R_{\a} \in \cR \biggl( (l^{p})\bigvee_{\a \in A} (X_{\a}, x_{\a}), (l^{p}) \bigvee_{\a \in A}(Y_{\a}, y_{\a}) \biggr).$$
\end{dfn}

\begin{notation}
For an arbitrary cardinal number $\kappa$ let $\D_{\kappa}$ denote the metric space of cardinality $\#\D_{\kappa}=\kappa$ in which all non-zero distances are equal to $1$.
\end{notation}

\subsection{Hausdorff Distance}

\begin{dfn}[\cite{BurBurIva}]
Let $(X, d_{X})$ be a metric space.
On the set $2^{X} \setminus \{\emptyset\}$, we introduce the \emph{Hausdorff distance}
$$d_{X}^{H}(A, B) = \max \biggl\{ \sup_{a \in A} \inf_{b \in B} d_{X}(a, b), \ \sup_{b \in B} \inf_{a \in A} d_{X}(a, b) \biggr\}.$$
\end{dfn}

The Hausdorff distance is equivalently reformulated via correspondences as follows.

\begin{rk}
Note that the Hausdorff distance between sets is given by the formula:
$$ d^{H}_{X}(A, B) = \max \bigl\{\delta(A, B), \, \delta(B, A) \bigr\}, $$
where
$$ \delta(A, B) := \sup_{a \in A} \inf_{b \in B} d_{X}(a, b). $$
\end{rk}

\begin{ass}\label{ass:correspondence}
For any nonempty subsets $A, B \subset X$ of a metric space $X$, we have
$$d^{H}_{X}(A, B) = \inf_{R_{0} \in \cR(A, B)} \sup_{(a, b) \in R_{0}} d_{X}(a, b).$$
\end{ass}

\begin{proof}
We carry out the proof assuming that both the Hausdorff distance and the infimum on the right-hand side of the equality to be proven are finite; the general case where one of the quantities (and consequently both) equals infinity is treated in an entirely similar manner.

The proof is based on the fact that one can explicitly construct a correspondence satisfying the required condition from the Hausdorff distance, and vice versa.

Let $d^{H}_{X}(A, B) = \e$. Then for any $\delta > 0$ and for each $a \in A$ there exists an element $b_a \in B$ such that $d_{X}(a, b_a) < \e + \delta$. Similarly, for each $b \in B$ there exists $a_b \in A$ such that $d_{X}(a_b, b) < \e + \delta$. Consider the correspondence $R_{0} \in \cR(A, B)$, defined by
$$ R_{0} = \bigl\{ (a, b_a) : a \in A \bigr\} \cup \bigl\{ (a_b, b) : b \in B \bigr\}. $$
By construction, $(a, b) \in R_{0}$ holds $d_{X}(a, b) < \e + \delta$, hence
$$\sup_{(a, b) \in R_{0}} d_{X}(a, b) \le \e + \delta.$$
Since $\delta > 0$ is arbitrary, we obtain
$$ \inf_{R_{0} \in \cR(A, B)} \sup_{(a, b) \in R_{0}} d_{X}(a, b) \le d^{H}_{X}(A, B). $$

Conversely, fix an arbitrary correspondence $R_{0} \in \cR(A, B)$ and let
$$\sup_{(a, b) \in R_{0}} d_{X}(a, b) = \e.$$
It follows from the definition of a correspondence that for any $a \in A$ there exists an element $b \in B$ such that $(a, b) \in R_{0}$. Then $\inf_{b' \in B} d_{X}(a, b') \le d_{X}(a, b) \le \e$. Taking the supremum over all $a \in A$, we obtain $\delta(A, B) \le \e$.

Symmetrically, for each $b \in B$ there exists $a \in A$ such that $(a, b) \in R_{0}$, which yields $\delta(B, A) \le \e$. Thus, $d^{H}_{X}(A, B) = \max\{\delta(A, B), \delta(B, A)\} \le \e$. Since the choice of $R_{0}$ is arbitrary, this implies
$$ d^{H}_{X}(A, B) \le \inf_{R_{0} \in \cR(A, B)} \sup_{(a, b) \in R_{0}} d_{X}(a, b). $$
\end{proof}

\begin{notation}
Let $(X, d_{X})$ be a metric space.
Denote by $\cP_{0}(X)$ the set of all its nonempty subsets.
We consider $\cP_{0}(X)$ as a space with a generalized pseudometric given by the Hausdorff distance $d_{X}^{H}$.
\end{notation}

\begin{notation}
By $\R_{+} = [0, \infty)$ we denote the non-negative real half-line with the standard metric.
\end{notation}

\section{Gromov--Hausdorff distance bounds: general case}

Let us estimate the Gromov--Hausdorff distance between wedge sums of pointed metric spaces.
Hereinafter, when we consider a product or a wedge sum of a family of metric spaces $\{X_{\a}\}_{\a \in A}$, we assume that this family $A$ contains at least two elements.

\begin{ass}[\cite{Me_GH_Product_2026}]
\begin{enumerate}
\item
Let arbitrary sequences of pointed metric spaces $\{(Y_n, y_{n})\}_{n \in \N}$ and correspondences
$$R_n \in \cR\bigl((X_n, x_{n}), (Y_n, y_{n})\bigr).$$
Let $1 \le p < \infty$, set
$$(X, x_{0}) = (l^{p}) \prod_{n = 1}^{\infty} (X_{n}, x_{n}), \
\quad (Y, y_{0}) = (l^{p}) \prod_{n = 1}^{\infty} (Y_{n}, y_{n}), \
$$
Consider the product of correspondences
$$R = \prod_{n = 1}^{\infty} R_{n} \in \cR\bigr((X, x_{0}), (Y,y_{0}) \bigr).$$
Then for $1\le p < \infty$ it holds that
$$\dis (R) \le \biggl( \sum_{n=1}^{\infty} \bigl( \dis(R_n) \bigr)^p \biggr)^{1/p}.$$
\item
Let arbitrary families of pointed metric spaces $\{ (X_{\a}, x_{\a}) \}_{\a \in A}$ and $\{(Y_{\a}, y_{\a})\}_{\a \in A}$ and correspondences
$R_{\a} \in \cR\bigl((X_{\a}, x_{\a}), (Y_{\a}, y_{\a})\bigr)$. Set
$$(X, x_0) = (l^{\infty}) \prod_{\a \in A} (X_{\a}, x_{\a}), \quad (Y, y_0) = (l^{\infty}) \prod_{\a \in A} (Y_{\a}, y_{\a}).$$
Consider the product of correspondences
$$R = \prod_{\a \in A} R_{\a} \in \cR\bigl((X, x_0), (Y, y_0)\bigr).$$
Then it holds that
$$\dis (R) \le \sup_{\a \in A} \dis(R_{\a}).$$
\end{enumerate}
\end{ass}

\begin{lem}
Let families of pointed metric spaces
$$\{ (X_{\a}, x_{\a}) \}_{\a \in A}$$
and $\{(Y_{\a}, y_{\a})\}_{\a \in A}$ and correspondences
$R_{\a} \in \cR((X_{\a}, x_{\a}), (Y_{\a},y_{\a}))$. Then for
$1\le p < \infty$ it holds that
$$\dis \biggl( (l^p)\bigvee_{\a \in A} R_{\a} \biggr) \le
\sup_{\substack{\a, \bt \in A \\ \a \ne \bt}} \biggl( \bigl(\dis(R_{\a})\bigr)^p + \bigl(\dis(R_{\bt})\bigr)^p \biggr)^{1/p},$$
and for $p=\infty$ it holds that
$$\dis \biggl( (l^{\infty})\bigvee_{\a \in A} R_{\a} \biggr) \le \sup_{\a \in A} \dis(R_{\a}).$$
\end{lem}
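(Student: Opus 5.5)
The plan is to bound $\bigl|d_X(x,\tilde x)-d_Y(y,\tilde y)\bigr|$ directly for two arbitrary pairs $(x,y),(\tilde x,\tilde y)\in R=\bigvee_{\a}R_{\a}$. Here $R=\bigcup_\a R_\a$, so $(x,y)\in R_\a$ and $(\tilde x,\tilde y)\in R_\bt$ for some $\a,\bt\in A$. We split into two cases according to whether $\a=\bt$.

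\emph{Case $\a=\bt$.} Then $x,\tilde x\in X_\a$ and $y,\tilde y\in Y_\a$, so both distances in the wedge coincide with the distances in the components. Hence the difference is at most $\dis(R_\a)$. Since $A$ contains at least two elements, we may pick any $\g\ne\a$, and then $\dis(R_\a)\le\bigl(\dis(R_\a)^p+\dis(R_\g)^p\bigr)^{1/p}$. This is bounded by the right-hand side for finite $p$, and trivially by $\sup_\a\dis(R_\a)$ for $p=\infty$.

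\emph{Case $\a\ne\bt$.} Now
$$d_X(x,\tilde x)=\bigl(d_{X_\a}(x,x_\a)^p+d_{X_\bt}(\tilde x,x_\bt)^p\bigr)^{1/p},\qquad d_Y(y,\tilde y)=\bigl(d_{Y_\a}(y,y_\a)^p+d_{Y_\bt}(\tilde y,y_\bt)^p\bigr)^{1/p}$$
(with $\max$ for $p=\infty$). This is the key point where the pointedness of the correspondences is used: $(x_\a,y_\a)\in R_\a$ and $(x_\bt,y_\bt)\in R_\bt$. Therefore
$$\bigl|d_{X_\a}(x,x_\a)-d_{Y_\a}(y,y_\a)\bigr|\le\dis(R_\a),\qquad \bigl|d_{X_\bt}(\tilde x,x_\bt)-d_{Y_\bt}(\tilde y,y_\bt)\bigr|\le\dis(R_\bt).$$
Consider the vectors $u=(d_{X_\a}(x,x_\a),d_{X_\bt}(\tilde x,x_\bt))$ and $v=(d_{Y_\a}(y,y_\a),d_{Y_\bt}(\tilde y,y_\bt))$ in $\R^2$. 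By the reverse triangle inequality for the $l^p$-norm,
$$\bigl|\|u\|_p-\|v\|_p\bigr|\le\|u-v\|_p\le\bigl(\dis(R_\a)^p+\dis(R_\bt)^p\bigr)^{1/p}.$$
For $p=\infty$ the same inequality gives the bound $\max\{\dis(R_\a),\dis(R_\bt)\}$.

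Taking the supremum over all pairs yields both inequalities of the lemma. Alternatively, one could embed the relevant three-component wedge into the $l^p$-product, as in the remark on the triangle inequality, and invoke the product assertion; but that route does not control the passage to the wedge correspondence, so the direct computation above is preferable.

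The only subtle steps are using the base-point condition in the second case and, in the first case, using the standing assumption that $A$ has at least two elements to compare with the supremum over $\a\ne\bt$. If some $\dis(R_\a)$ is infinite, the right-hand side is infinite and there is nothing to prove.
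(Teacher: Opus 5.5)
Your proof is correct. It uses the same split as the paper: pairs in one component versus pairs in different components, with $\#A\ge 2$ used to absorb $\sup_\alpha \dis(R_\alpha)$ into the pairwise supremum.

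The difference is in the cross-component case. The paper does no direct computation there. It observes that $X_\alpha\vee X_\beta$ embeds isometrically in $(l^p)\,X_\alpha\times X_\beta$ via $x\mapsto (x,x_\beta)$ and $\tilde x\mapsto (x_\alpha,\tilde x)$. Hence $\dis\bigl((l^p)R_\alpha\vee R_\beta\bigr)\le\dis\bigl((l^p)R_\alpha\times R_\beta\bigr)$, and it then quotes the cited product assertion. Your reverse Minkowski inequality in $\R^2$ is exactly what that assertion reduces to for two factors. So your version is self-contained, while the paper's is shorter because it reuses an existing result.

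Your aside that the product route ``does not control the passage to the wedge correspondence'' is mistaken. Under the embedding above, $R_\alpha\cup R_\beta$ is sent to pairs of the form $\bigl((x,x_\beta),(y,y_\beta)\bigr)$ and $\bigl((x_\alpha,\tilde x),(y_\alpha,\tilde y)\bigr)$. These lie inside $R_\alpha\times R_\beta$ precisely because $(x_\alpha,y_\alpha)\in R_\alpha$ and $(x_\beta,y_\beta)\in R_\beta$. The distortion of a subset of a relation is at most that of the relation. So the base-point condition enters both arguments at the same place, and only two components, not three, are needed.
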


\begin{proof}
We prove this for $1 \le p < \infty$, the case $p = \infty$ is similar.
Set $(X, x_0) = (l^p) \prod_{\a \in A} (X_{\a}, x_{\a}), \ (Y, y_0) = (l^p) \prod_{\a \in A} (Y_{\a}, y_{\a})$.
Recall that $(x_{\a}, y_{\a}) \in R_{\a}$ for all $\a \in A$.
We will use the following observation: note that for any $\a\ne\bt$ there is an isometric embedding
$(l^p) R_{\a} \vee R_{\bt} \subset (l^p) R_{\a} \times R_{\bt}$, therefore
$\dis \bigl( (l^p) R_{\a} \vee R_{\bt} \bigr) \le \dis\bigl( (l^p) R_{\a} \times R_{\bt} \bigr)$.
Then
\begin{multline*}
\dis \biggl( (l^p) \bigvee_{\a \in A} R_{\a} \biggr) =
\sup_{\a, \bt \in A} \
\sup_{\substack{  (x_{\a}, y_{\a})\in R_{\a} \\ (\tilde{x}_{\bt}, \tilde{y}_{\bt})\in R_{\bt} }}
\bigl|d_{X}(x_{\a}, \tilde{x}_{\bt}) - d_{Y}(y_{\a}, \tilde{y}_{\bt}) \bigr| = \\
\
= \max \Biggl\{
\sup_{\substack{\a, \bt \in A \\ \a \ne \bt}} \
\sup_{\substack{  (x_{\a}, y_{\a})\in R_{\a} \\ (\tilde{x}_{\bt}, \tilde{y}_{\bt})\in R_{\bt} }}
\bigl|d_{X}(x_{\a}, \tilde{x}_{\bt}) - d_{Y}(y_{\a}, \tilde{y}_{\bt}) \bigr|, \
\sup_{\a \in A} \
\sup_{\substack{  (x_{\a}, y_{\a})\in R_{\a} \\ (\tilde{x}_{\a}, \tilde{y}_{\a})\in R_{\a} }}
\bigl|d_{X}(x_{\a}, \tilde{x}_{\a}) - d_{Y}(y_{\a}, \tilde{y}_{\a}) \bigr|
\Biggr\}  = \\
\
= \max \Biggl\{
\sup_{\substack{\a, \bt \in A \\ \a \ne \bt}} \dis((l^p) R_{\a} \vee R_{\bt}), \
\sup_{\a \in A} \dis(R_{\a})
\Biggr\}
\le \max \Biggl\{
\sup_{\substack{\a, \bt \in A \\ \a \ne \bt}} \dis((l^p) R_{\a} \times R_{\bt}), \
\sup_{\a \in A} \dis(R_{\a})
\Biggr\} \le \\
\
\le \max \Biggl\{
\sup_{\substack{\a, \bt \in A \\ \a \ne \bt}}
\biggl( \bigl(\dis(R_{\a})\bigr)^p + \bigl(\dis(R_{\bt})\bigr)^p \biggr)^{1/p}, \
\sup_{\a \in A} \dis(R_{\a})
\Biggr\}
= \sup_{\substack{\a, \bt \in A \\ \a \ne \bt}}
\biggl( \bigl(\dis(R_{\a})\bigr)^p + \bigl(\dis(R_{\bt})\bigr)^p \biggr)^{1/p}.
\end{multline*}
\end{proof}

\begin{thm}\label{thm:GH_upper_bound}
Let families of pointed metric spaces
$$\{ (X_{\a}, x_{\a}) \}_{\a \in A} \quad \text{ and } \quad \{(Y_{\a}, y_{\a})\}_{\a \in A}.$$
Then for
$1\le p < \infty$ it holds that
\begin{multline*}
d_{pGH} \biggl(
(l^p)\bigvee_{\a \in A} (X_{\a}, x_{\a}), \
(l^p)\bigvee_{\a \in A} (Y_{\a}, y_{\a}) \biggr) \le \\
\
\le \sup_{\substack{\a, \bt \in A \\ \a \ne \bt}} \biggl(
\Bigl( d_{pGH}\bigl((X_{\a}, x_{\a}), (Y_{\a}, y_{\a})\bigr) \Bigr)^p +
\Bigl( d_{pGH}\bigl((X_{\bt}, x_{\bt}), (Y_{\bt}, y_{\bt})\bigr) \Bigr)^p
\biggr)^{1/p},
\end{multline*}
and for $p=\infty$ it holds that
$$
d_{pGH} \biggl(
(l^{\infty})\bigvee_{\a \in A} (X_{\a}, x_{\a}), \
(l^{\infty})\bigvee_{\a \in A} (Y_{\a}, y_{\a}) \biggr) \le
\sup_{\a \in A}
d_{pGH}\bigl((X_{\a}, x_{\a}), (Y_{\a}, y_{\a})\bigr).
$$
\end{thm}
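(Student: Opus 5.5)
The plan is to deduce the theorem directly from the preceding Lemma by an approximation argument, since pointed Gromov--Hausdorff distances are infima over pointed correspondences. I will write $d_{\a} = d_{pGH}\bigl((X_{\a}, x_{\a}), (Y_{\a}, y_{\a})\bigr)$ and denote the right-hand side of the asserted inequality by $S$ (for $p<\infty$) or $S_\infty$ (for $p=\infty$). If $S=\infty$ there is nothing to prove, so assume $S<\infty$; in particular every $d_{\a}$ is finite.

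Fix $\e>0$. For each $\a\in A$ choose, by the definition of $d_{pGH}$, a correspondence $R_{\a}\in\cR\bigl((X_{\a},x_{\a}),(Y_{\a},y_{\a})\bigr)$ with $\dis(R_{\a}) \le 2d_{\a} + 2\e$; this uses the axiom of choice when $A$ is infinite. Form the wedge sum of correspondences $R = (l^p)\bigvee_{\a\in A} R_{\a}$. I will check that it is a pointed correspondence between the wedge sums. Surjectivity of the projections follows because each point of the wedge lies in some $X_{\a}$ (resp.\ $Y_{\a}$). The common base point pair $(x_0,y_0)$ lies in $R$ because $(x_{\a},y_{\a})\in R_{\a}$ and all $x_{\a}$, resp.\ $y_{\a}$, are identified. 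This is exactly why the correspondences must be chosen pointed.

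Applying the Lemma gives, for $1\le p<\infty$,
$$\dis(R) \le \sup_{\a\ne\bt}\Bigl((2d_{\a}+2\e)^p+(2d_{\bt}+2\e)^p\Bigr)^{1/p}.$$
Minkowski's inequality in $\R^2$ with the $l^p$ norm bounds this by
$$2\sup_{\a\ne\bt}\bigl(d_{\a}^p+d_{\bt}^p\bigr)^{1/p} + 2\cdot 2^{1/p}\e.$$
Hence $d_{pGH}\le \frac12\dis(R)\le S + 2^{1/p}\e$, and letting $\e\to0$ finishes the case $p<\infty$. For $p=\infty$ the Lemma gives $\dis(R)\le \sup_{\a}(2d_{\a}+2\e)$, and the same limit argument yields the bound $S_\infty$.

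The only delicate point is the uniformity of the approximation. The error $\e$ must be chosen independently of $\a$, which is done above, so that the supremum over the possibly infinite index set does not destroy the estimate; the Minkowski step handles this cleanly. Everything else is routine bookkeeping.
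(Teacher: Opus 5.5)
Your proposal is correct and follows the paper's proof essentially step for step. Like the paper, you choose near-optimal pointed correspondences $R_{\a}$ with an error $\e$ that does not depend on $\a$, form $(l^p)\bigvee_{\a} R_{\a}$, apply the preceding Lemma, absorb the error with Minkowski's inequality, and let $\e\to0$. Your explicit check that the base-point pair lies in $\bigvee_{\a} R_{\a}$, and your reduction to the case $S<\infty$, are slightly more careful than the paper's version but change nothing essential.
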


\begin{proof}
We prove this in the case $1 \le p < \infty$, the case $p = \infty$ is similar.
If for some $\a$ we have $d_{pGH}\bigl((X_{\a}, x_0), (Y_{\a}, y_0)\bigr) = \infty$, then there is nothing to prove. We will assume that
for all $\a \in A$ it holds that $d_{pGH}\bigl((X_{\a}, x_0), (Y_{\a}, y_0)\bigr) < \infty$.
Fix $\e > 0$ and consider $R_{\a} \in \cR\bigl((X_{\a},x_0), (Y_{\a}, y_0)\bigr)$ such that
$$\Bigl| 2d_{pGH}\bigl((X_{\a}, x_0), (Y_{\a}, y_0)\bigr) - \dis(R_{\a}) \Bigr| < \e.$$
In what follows, for brevity, we will write $X_{\a}$ instead of $(X_{\a}, x_0)$ and $Y_{\a}$ instead of $(Y_{\a}, y_0)$.
Then
\begin{multline*}
\biggl| \Bigl( \bigl(\dis(R_{\a})\bigr)^p + \bigl(\dis(R_{\bt})\bigr)^p \Bigr)^{1/p} -
\Bigl( \bigl(  2d_{pGH}(X_{\a}, Y_{\a}) \bigr)^p +
\bigl(  2d_{pGH}(X_{\bt}, Y_{\bt}) \bigr)^p \Bigr)^{1/p}\biggr| \le \\
\
\le \biggl(
\Bigl| \dis(R_{\a}) - 2d_{pGH}(X_{\a}, Y_{\a}) \Bigr|^p +
\Bigl| \dis(R_{\bt}) - 2d_{pGH}(X_{\bt}, Y_{\bt}) \Bigr|^p
\biggr)^{1/p} \le 2^{1/p}\e,
\end{multline*}
therefore
\begin{multline*}
2d_{pGH} \biggl(
(l^p)\bigvee_{\a \in A} (X_{\a}, x_0), \
(l^p)\bigvee_{\a \in A} (Y_{\a}, y_0) \biggr) \le
\dis \biggl( (l^p)\bigvee_{\a \in A} R_{\a} \biggr) \le \\
\
\le \sup_{\substack{\a, \bt \in A \\ \a \ne \bt}} \biggl( \bigl(\dis(R_{\a})\bigr)^p + \bigl(\dis(R_{\bt})\bigr)^p \biggr)^{1/p} \le \\
\
\le \sup_{\substack{\a, \bt \in A \\ \a \ne \bt}} \biggl( \bigl( 2d_{pGH}(X_{\a}, Y_{\a})\bigr)^p + \bigl(2d_{pGH}(X_{\bt}, Y_{\bt}))\bigr)^p \biggr)^{1/p} + 2^{1/p}\e.
\end{multline*}
Letting $\e \to 0$, we obtain the desired result.
\end{proof}

\begin{rk}
The bound in Theorem~\ref{thm:GH_upper_bound} makes sense (i.e., does not become infinite) if and only if
$$\sup_{\a \in A} d_{pGH}(X_{\a}, Y_{\a}) < \infty.$$
\end{rk}

\begin{dfn}
For an arbitrary pointed metric space $(X, d_{X}, x_0)$ we define its \emph{radius}
$$\Rad(X, x_0) = \sup_{x \in X}d_{X}(x, x_0).$$
\end{dfn}

\begin{rk}
It is clear from the definition that for any pointed metric space $(X, d_{X}, x_0)$ the following holds
$$\Rad(X, x_0) \le \diam(X) \le 2\Rad(X, x_0).$$
\end{rk}

\begin{thm}\label{thm:GH_lower_bound}
Let $\{(X_{\a}, x_{\a})\}_{\a \in A}$ be a family of pointed metric spaces. Let $(X,x_0) = (l^p)\vee_{\a \in A} (X_{\a}, x_{\a})$, $(Y, y_0) = (l^p)\vee_{\a \in A} (Y_{\a}, y_{\a})$.

Then for $1 \le p < \infty$ it holds that
\begin{multline*}
d_{pGH}\bigl((X,x_0), (Y, y_0)\bigr) \ge
\sup_{\a \in A} \Biggl( d_{pGH}\bigl((X_{\a}, x_{\a}), (Y_{\a}, y_{\a})\bigr) - \\
\
- \frac{1}{2}
\max\Biggl\{
\sup_{S \subset A \setminus \{ \a \} \: \#S \le 2}
\biggl( \sum_{\bt \in S} \bigl( \Rad(X_{\bt}, x_{\bt}) \bigr)^p \biggr)^{1/p}, \
\sup_{\bt \ne \alpha} \diam(X_{\bt})
\Biggr\}
- \\
- \frac{1}{2} \max\Biggl\{ \sup_{S \subset A \setminus \{ \a \} \: \#S \le 2}
\biggl( \sum_{\bt \in S} \bigl( \Rad(Y_{\bt}, y_{\bt}) \bigr)^p \biggr)^{1/p}, \
\sup_{\bt \ne \alpha} \diam(Y_{\bt}) \Biggr\}
\Biggr).
\end{multline*}
For $p = \infty$ it holds that
\begin{multline*}
d_{pGH}\bigl((X,x_0), (Y, y_0)\bigr) \ge \\
\ge \sup_{\a \in A} \Biggl( d_{pGH}\bigl((X_{\a}, x_{\a}), (Y_{\a}, y_{\a})\bigr)
- \frac{1}{2} \sup_{\bt \ne \a} \
\diam(X_{\bt}) -
\frac{1}{2} \sup_{\bt \ne \a} \
\diam(Y_{\bt})
\Biggr).
\end{multline*}
\end{thm}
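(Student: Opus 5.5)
Fix $\a \in A$ and an arbitrary correspondence $R \in \cR\bigl((X,x_0),(Y,y_0)\bigr)$ with $\dis(R) = r < \infty$. The plan is to \emph{project} $R$ onto the components $X_{\a}$ and $Y_{\a}$, obtaining a pointed correspondence $R_{\a}$ with
$$\dis(R_{\a}) \le r + M_X + M_Y .$$
Here $M_X$ and $M_Y$ denote the two maxima in the statement (with $\max\{\sup\Rad,\ \sup\diam\}$ replaced by $\sup_{\bt\ne\a}\diam$ when $p=\infty$). Then $2d_{pGH}\bigl((X_{\a},x_{\a}),(Y_{\a},y_{\a})\bigr) \le r + M_X + M_Y$. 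Taking the infimum over $R$, dividing by $2$, and then taking the supremum over $\a$ gives the theorem.

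The construction is as follows. Let $\pi_X \: X \to X_{\a}$ be the identity on $X_{\a}$ and send every point of $X_{\bt}$, $\bt \ne \a$, to $x_{\a}$; define $\pi_Y$ similarly. Put
$$R_{\a} = \bigl\{ (\pi_X(x), \pi_Y(y)) : (x,y) \in R,\ x \in X_{\a} \text{ or } y \in Y_{\a} \bigr\}.$$
Since $(x_0,y_0) \in R$, it contains $(x_{\a},y_{\a})$. Every $x \in X_{\a}$ has some $R$-partner $y$, so $x$ appears as a first coordinate; symmetrically for $Y_{\a}$. Hence $R_{\a} \in \cR\bigl((X_{\a},x_{\a}),(Y_{\a},y_{\a})\bigr)$.

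For the distortion estimate, take $(x,y),(x',y') \in R$ as above. By the triangle inequality for absolute values,
$$\bigl|d(\pi_X x,\pi_X x') - d(\pi_Y y,\pi_Y y')\bigr| \le r + \bigl|d(\pi_X x,\pi_X x') - d(x,x')\bigr| + \bigl|d(\pi_Y y,\pi_Y y') - d(y,y')\bigr|.$$
It therefore suffices to show $\bigl|d(\pi_X x,\pi_X x') - d(x,x')\bigr| \le M_X$ for all $x,x' \in X$, and similarly for $Y$. This is a case analysis.
\begin{itemize}
\item If $x,x' \in X_{\a}$, the error is $0$.
\item If exactly one point is moved, say $x \in X_{\bt}$ with $\bt\ne\a$ and $x' \in X_{\a}$, the triangle inequality gives error at most $d(x,x_{\a}) = d(x,x_{\bt}) \le \Rad(X_{\bt},x_{\bt})$. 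This is the term with $\#S = 1$.
\item If both points are moved, $d(\pi_X x,\pi_X x') = 0$ and the error equals $d(x,x')$. If $x,x'$ lie in the same $X_{\bt}$, this is at most $\diam(X_{\bt})$. If they lie in different $X_{\bt}, X_{\g}$, it equals $\bigl(d(x,x_{\bt})^p + d(x',x_{\g})^p\bigr)^{1/p} \le \bigl(\Rad(X_{\bt})^p + \Rad(X_{\g})^p\bigr)^{1/p}$, which is the $\#S = 2$ term.
\end{itemize}
For $p = \infty$ the same cases give bounds $\Rad(X_{\bt}) \le \diam(X_{\bt})$ or $\max\{\Rad(X_{\bt}),\Rad(X_{\g})\}$, all at most $\sup_{\bt\ne\a}\diam(X_{\bt})$.

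The step needing the most care is this bookkeeping of which coordinates are moved by the projections. A pair $(x,y)\in R$ may enter $R_{\a}$ with $x$ moved but $y$ not, or vice versa. The key observation is that the $X$-error and the $Y$-error are bounded independently, each by its own space's data. This is exactly why the bound splits into separate $M_X$ and $M_Y$ terms, and no interaction between the two sides has to be controlled. One must also note that if $r = \infty$ the inequality is trivial, and infinite radii make the right-hand side $-\infty$, so such cases need no separate treatment.
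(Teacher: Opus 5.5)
Your proposal is correct and follows essentially the same route as the paper. You project $R$ via the maps that collapse every branch other than $X_{\a}$, $Y_{\a}$ to the base point, bound the resulting distortion by $\dis(R)$ plus separate $X$- and $Y$-errors, and control those errors with the same three-case analysis ($\Rad$ when one point is collapsed, $\diam$ or the two-term $l^p$ sum of radii when both are). Restricting $R_{\a}$ to pairs with a coordinate in the $\a$-branch gives a harmless sub-correspondence of the paper's $p_{\a}R$, and fixing $R$ before taking the infimum makes your bookkeeping slightly cleaner than the paper's chain of infima.
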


\begin{proof}
Consider the case $1 \le p < \infty$, the case $p = \infty$ is similar.
Define a map $p_{\a} \: X \to X_{\a}$ given as follows:
if $x \in X_{\a}$, then $p_{\a}(x) = x$, and if $x \in X_{\bt}, \ \bt \ne \a$, then $p_{\a}(x) = x_0$. The map $p_{\a} \: Y \to Y_{\a}$ is defined similarly.
For a given correspondence $R \in \cR((X, x_0), (Y, y_0))$ we define
$$p_{\a}R = \Bigl\{ (x', y') \in X_{\a} \times Y_{\a} \ \Big| \ \exists (x, y) \in X \times Y \: \bigl(p_{\a}(x), p_{\a}(y)\bigr) = (x', y') \Bigr\},$$
then $p_{\a}R$ is a correspondence
$$p_{\a}R \in \cR\bigl((X_{\a}, x_{\a}), (Y_{\a}, y_{\a})\bigr).$$
For each $\a \in A$ we obtain
\begin{multline*}
d_{pGH}\bigl((X_{\a}, x_0), (Y_{\a}, y_0)\bigr) = \frac{1}{2} \inf_{R \in \cR((X_{\a}, x_0), (Y_{\a}, y_0))} \dis(R)
\le \frac{1}{2} \inf_{R \in \cR\left((X, x_0),(Y, y_0)\right)} \dis(p_{\a}R) = \\
\
= \frac{1}{2} \inf_{R \in \cR((X, x_0),(Y, y_0))}
\sup_{\substack{(x, y) \in R \\ (\tilde{x}, \tilde{y}) \in R}}
\bigl| d_{X_{\a}}(p_{\a}x, p_{\a}\tilde{x}) - d_{Y_{\a}}(p_{\a}y, p_{\a}\tilde{y})\bigr| \le \\
\
\le \frac{1}{2} \inf_{R \in \cR((X, x_0),(Y, y_0))}
\sup_{\substack{(x, y) \in R \\ (\tilde{x}, \tilde{y}) \in R}}
\Bigl(
\bigl| d_{X}(x, \tilde{x}) - d_{Y}(y, \tilde{y}) \bigr| + \\ +
\bigl| d_{X}(x, \tilde{x}) - d_{X_{\a}}(p_{\a}x, p_{\a}\tilde{x}) \bigr| +
\bigl| d_{Y}(y, \tilde{y}) - d_{Y_{\a}}(p_{\a}y, p_{\a}\tilde{y}) \bigr|
\Bigr) \le \\
\
\le d_{pGH}((X, x_0), (Y, y_0)) +
\frac{1}{2} \inf_{R \in \cR((X, x_0), (Y, y_0))}
\sup_{x, \tilde{x} \in X}
\bigl| d_{X}(x, \tilde{x}) - d_{X_{\a}}(p_{\a}x, p_{\a}\tilde{x}) \bigr| + \\
\
+ \frac{1}{2} \inf_{R \in \cR((X, x_0), (Y, y_0))}
\sup_{y, \tilde{y} \in Y}
\bigl| d_{Y}(y, \tilde{y}) - d_{Y_{\a}}(p_{\a}y, p_{\a}\tilde{y}) \bigr|.
\end{multline*}
Now consider the expression
$$G = \bigl| d_{X}(x, \tilde{x}) - d_{X_{\a}}(p_{\a}x, p_{\a}\tilde{x}) \bigr|$$
and estimate it in each of the following cases.
\begin{itemize}
\item
Let $x \in X_{\a}$, $\tilde{x} \in X_{\a}$. Then $p_{\a}x = x$, $p_{\a}\tilde{x} = \tilde{x}$, therefore $G = 0$.
\item
Let $x \in X_{\a}$, $\tilde{x} \in X_{\bt}$, $\bt \ne \a$.
Then
\begin{multline*}
G = \bigl| d_{X}(x, \tilde{x}) - d_{X_{\a}}(p_{\a}x, p_{\a}\tilde{x}) \bigr| =
\biggl|
\Bigl\| \bigl(d_{X_{\a}}(x, x_{\a}), d_{X_{\bt}}(\tilde{x}, x_{\bt}) \bigr) \Bigr\|_{l^p} -
\Bigl\| \bigl(d_{X_{\a}}(x, x_{\a}), 0 \bigr) \Bigr\|_{l^p}
\biggr| \le \\
\
\le \Bigl\| \bigl(0, d_{X_{\bt}}(\tilde{x}, x_{\bt})\bigr) \Bigr\|_{l^p} = d_{X_{\bt}}(\tilde{x}, x_{\bt}) \le \Rad(X_{\bt}, x_{\bt}).
\end{multline*}
\item
Let $x \in X_{\bt}$, $\tilde{x} \in X_{\g}$, $\bt \ne \a$, $\g \ne \a$, $\g \ne \bt$.
Then
\begin{multline*}
G = \bigl| d_{X}(x, \tilde{x}) - d_{X_{\a}}(p_{\a}x, p_{\a}\tilde{x})\bigr| = d_{X}(x, \tilde{x}) =
\Bigl( \bigl(d_{X_{\bt}}(x, x_{\bt})\bigr)^p + \bigl(d_{X_{\g}}(\tilde{x}, x_{\g})\bigr)^p \Bigr)^{1/p} \le \\
\
\le \Bigl( \bigl(\Rad(X_{\bt}, x_{\bt})\bigr)^p + \bigl(\Rad(X_{\g}, x_{\g})\bigr)^p \Bigr)^{1/p}.
\end{multline*}
\item
Let $x, \tilde{x} \in X_{\beta}, \ \beta \ne \alpha$. Then
$$G = \bigl|d_{X}(x, \tilde{x}) - d_{X_{\a}}(p_{\a}x, p_{\a}\tilde{x}) \bigr| = d_{X_{\bt}}(x, \tilde{x}) \le \diam(X_{\bt}).$$
\end{itemize}
Therefore, the following inequality holds
\begin{multline*}
\inf_{R \in \cR((X, x_0), (Y, y_0))}
\sup_{\substack{(x, y) \in R \\ (\tilde{x}, \tilde{y}) \in R}}
\bigl| d_{X}(x, \tilde{x}) - d_{X_{\a}}(p_{\a}x, p_{\a}\tilde{x}) \bigr| \le
\\
\le \max \Biggl\{
\sup_{S \subset A \setminus \{ \a \} \: \#S \le 2}
\biggl( \sum_{\vk \in S} \bigl( \Rad(X_{\vk}, x_{\vk}) \bigr)^{p} \biggr)^{1/p}, \
\sup_{\bt \ne \a} \diam(X_{\bt})
\Biggr\}
\end{multline*}
The latter expression is estimated similarly:
$\bigl| d_{Y}(y, \tilde{y}) - d_{Y_{\a}}(p_{\a}y, p_{\a}\tilde{y}) \bigr|$.

Thus, for each $\a \in A$ it holds that
\begin{multline*}
d_{pGH}\bigl((X_{\a}, x_{\a}), (Y_{\a}, y_{\a})\bigr) \le d_{pGH}\bigl((X, x_0), (Y, y_0)\bigr) + \\
\
+ \frac{1}{2} \max \Biggl \{
\sup_{S \subset A \setminus \{ \a \} \: \#S \le 2} \biggl( \sum_{\vk \in S} \bigl( \Rad(X_{\vk}, x_{\vk}) \bigr)^p \biggr)^{1/p}, \
\sup_{\bt \ne \a} \diam(X_{\bt})
\Biggr\}
+ \\ + \frac{1}{2} \max \Biggl \{
\sup_{S \subset A \setminus \{ \a \} \: \#S \le 2} \biggl( \sum_{\vk \in S} \bigl( \Rad(Y_{\vk}, y_{\vk}) \bigr)^p \biggr)^{1/p}, \
\sup_{\bt \ne \a} \diam(Y_{\bt})
\Biggr\},
\end{multline*}
from which the required inequality immediately follows by passing to the supremum over $\a$.
\end{proof}

\begin{rk}
The condition $\# S \le 2$ over which the supremum is taken in Theorem~\ref{thm:GH_lower_bound}, is due to the fact that the projection $p_{\a}$ `collapses' all components except $X_{\a}$ to the base point, so when estimating the distortion for points $x \in X_{\bt}$ and $\tilde{x} \in X_{\g}$ it accumulates from at most two branches of the wedge sum.
\end{rk}

\section{Bound for $l^p$-wedge sums on a common family of base spaces}

\begin{notation}
For any pointed metric space $(X, d_{X}, x_{0})$ and number $\lambda > 0$ we denote by $\lambda (X, d_{X}, x_{0})$ or $\lambda (X, x_0)$ the pointed metric space $(X, \lambda d_{X}, x_{0})$.
\end{notation}

Recall the following proposition.

\begin{prop}[\cite{Me_GH_Product_2026}] \label{prop:matan}
\begin{enumerate}
    \item Let $a=\{a_n\}_{n \in \N}\in l^p$, $b=\{b_n\}_{n \in \N}\in l^p$ be two sequences, $1 \le p < \infty$.
    Define the function $\xi_p \: [0, 1]^{\N} \to \R$ given as follows\/\rom:
    $$\xi_p (\vk)=\Biggl| \biggl(\sum_{n=1}^{\infty} |a_n \vk_n|^p \biggl)^{1/p} -
    \biggl(\sum_{n=1}^{\infty} |b_n \vk_n|^p \biggl)^{1/p}
    \Biggr|. $$
    Then $$\sup_{\vk \in [0, 1]^{\N}} \xi_p (\vk)=\sup_{\vk \in \{0, 1\}^{\N}} \xi_p (\vk)=\sup_{S \subset \N} \ \Biggl|
    \biggl(\sum_{n \in S} |a_n|^p \biggr)^{1/p}- \biggl(\sum_{n \in S} |b_n|^p \biggr)^{1/p} \Biggr|.$$

    \item Let $a = \{a_{\alpha}\}_{\alpha \in A}$, $b = \{b_{\alpha}\}_{\alpha \in A}$ be families of numbers such that $\sup_{\alpha \in A} a_{\alpha} < \infty$ and $\sup_{\alpha \in A} b_{\alpha} < \infty$.
    Define the function $\xi_{\infty} :\ [0, 1]^A \to \R$ given as follows\/\rom:
    $$\xi_{\infty}(\vk) = \bigg| \sup_{\alpha \in A}|a_{\alpha}\vk_{\alpha}| - \sup_{\alpha \in A}|b_{\alpha}\vk_{\alpha}|\bigg|.$$
    Then
    $$\sup_{\vk \in [0, 1]^A} \xi_{\infty}(\vk)=\sup_{\vk \in \{0, 1\}^A} \xi_{\infty}(\vk)= \sup_{S \subset A}\bigg| \sup_{\alpha \in S}|a_{\alpha}| - \sup_{\alpha \in S}|b_{\alpha}|\bigg|.$$
\end{enumerate}
\end{prop}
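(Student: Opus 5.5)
The plan is to prove the two parts separately, first for $1\le p<\infty$; the case $p=\infty$ is the same argument with sums replaced by suprema. Each part claims two equalities. The inequalities
$$\sup_{S}\Bigl|\dots\Bigr|=\sup_{\vk\in\{0,1\}^{\N}}\xi_p(\vk)\le\sup_{\vk\in[0,1]^{\N}}\xi_p(\vk)$$
are immediate. Indeed, $\{0,1\}^{\N}\subset[0,1]^{\N}$, and a $0/1$ vector $\vk$ is just the indicator of a set $S\subset\N$, on which $\xi_p(\vk)$ equals the expression for $S$. We may also replace $a_n,b_n$ by $|a_n|,|b_n|$, so assume $a_n,b_n\ge0$. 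Write
$$A(\vk)=\Bigl(\sum_n a_n^p\vk_n^p\Bigr)^{1/p},\qquad B(\vk)=\Bigl(\sum_n b_n^p\vk_n^p\Bigr)^{1/p}.$$
It remains to show that for every $\vk\in[0,1]^{\N}$ there is $S\subset\N$ with $|A(\chi_S)-B(\chi_S)|\ge|A(\vk)-B(\vk)|$, at least up to an arbitrarily small error.

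The key step is a convexity argument in the weights $t_n=\vk_n^p\in[0,1]$. The functions $t\mapsto\bigl(\sum a_n^pt_n\bigr)^{1/p}$ and $t\mapsto\bigl(\sum b_n^pt_n\bigr)^{1/p}$ are concave in $t$, so the convexity of their difference is not obvious directly. Instead, use the dual form of $\ell^p$ norms. Without loss of generality $A(\vk)\ge B(\vk)$; the other case is symmetric. Let $S=\{n: a_n\vk_n\ \text{``dominates''}\}$; the precise choice is made below.

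I would instead argue coordinatewise with a homogeneity trick. Fix all coordinates but $\vk_n$ and consider
$$g(s)=(\alpha+a_n^ps)^{1/p}-(\beta+b_n^ps)^{1/p},\qquad s=\vk_n^p\in[0,1],$$
where $\alpha=\sum_{m\ne n}a_m^p\vk_m^p$ and $\beta=\sum_{m\ne n}b_m^p\vk_m^p$. Then
$$g'(s)=\tfrac1p\bigl(a_n^p(\alpha+a_n^ps)^{1/p-1}-b_n^p(\beta+b_n^ps)^{1/p-1}\bigr).$$
The sign of $g'$ is that of
$$\frac{a_n}{A}-\frac{b_n}{B}$$
raised to the appropriate power, with $A=A(\vk)$ and $B=B(\vk)$ as functions of $s$. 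As $s$ grows, the ratio $a_n/A$ moves toward a limit monotonically. The hoped-for conclusion is that $g$ is quasi-convex, decreasing then increasing, so its maximum over $[0,1]$ is attained at $s\in\{0,1\}$.

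If this monotonicity works, then $|g|$ is also maximized at an endpoint. This holds when $g$ does not change sign; otherwise one of the endpoints already gives at least the same absolute value, by applying the same argument to $-g$. We then replace coordinates one at a time by $0$ or $1$ without decreasing $\xi_p$. For finitely many coordinates this terminates. For infinitely many, truncate: since $a,b\in l^p$, $\xi_p(\vk)$ is approximated within $\e$ by vectors $\vk$ with finite support, by continuity of the $\ell^p$ norm under truncation. This gives the supremum equality.

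The main obstacle is the quasi-convexity of $g$. The derivative sign condition is
$$a_n^p(\alpha+a_n^ps)^{(1-p)/p}\gtrless b_n^p(\beta+b_n^ps)^{(1-p)/p}.$$
For $p>1$ both sides decrease in $s$, and one must check that they cross at most once, with the correct orientation. I would verify this by taking logarithms and comparing the rates. If $b_n=0$ or $a_n=0$ then $g$ is monotone and the claim is trivial. Otherwise set $u=\alpha/a_n^p$ and $v=\beta/b_n^p$, so the condition reads
$$a_n^{p}\cdot a_n^{1-p}(u+s)^{(1-p)/p}\ \text{vs}\ b_n\,(v+s)^{(1-p)/p}.$$
The ratio
$$\Bigl(\frac{u+s}{v+s}\Bigr)^{(1-p)/p}$$
is monotone in $s$, so the sign changes at most once. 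One still has to check that the change goes from $-$ to $+$, so that $s$ is a minimum and not a maximum. If it goes the wrong way, then $g$ is quasi-concave, and I would fall back on the fact that $-g$ is then quasi-convex. In either case $|g|$ is maximized at an endpoint: a quasi-concave $g$ has its positive part attaining its maximum possibly at an interior point, which would be a problem. So in that situation I would argue directly from the homogeneity of $A$ and $B$.

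For part (2), $\sup_\alpha a_\alpha\vk_\alpha$ and $\sup_\alpha b_\alpha\vk_\alpha$ are handled directly. Suppose $\sup_\alpha a_\alpha\vk_\alpha>\sup_\alpha b_\alpha\vk_\alpha+\delta$. Choose $\alpha_0$ with $a_{\alpha_0}\vk_{\alpha_0}$ near the supremum, and let $S=\{\alpha_0\}\cup\{\alpha: b_\alpha\le\dots\}$, or simply $S=\{\alpha_0\}$. Then
$$a_{\alpha_0}-b_{\alpha_0}\ge\vk_{\alpha_0}(a_{\alpha_0}-b_{\alpha_0})>\delta-\e,$$
which gives the bound.
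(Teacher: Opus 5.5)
You have a genuine gap at the step you yourself flag as the main obstacle. Your surrounding reductions are fine: passing to $|a_n|,|b_n|$, the trivial inequality via $\{0,1\}^{\N}\subset[0,1]^{\N}$, truncation to finitely supported $\vk$, and the coordinate-by-coordinate replacement scheme all work. Your direct argument for part (2) with $S=\{\alpha_0\}$ is also correct.

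\textbf{The gap.} Part (1) rests on a one-variable claim: for $g(s)=(\alpha+a_n^ps)^{1/p}-(\beta+b_n^ps)^{1/p}$, the function $|g|$ attains its maximum on $[0,1]$ at $s\in\{0,1\}$. What you actually prove is only that $g'$ changes sign at most once, i.e.\ that $g$ is quasi-convex or quasi-concave. That is not enough. A quasi-concave $g$ with a positive interior maximum, or a quasi-convex $g$ with a negative interior minimum, would give $|g|$ an interior maximum, and the replacement step would fail. ``Applying the same argument to $-g$'' does not rule this out, because $-g$ is quasi-convex precisely when $g$ is quasi-concave. Your remark that $|g|$ is maximized at an endpoint whenever $g$ keeps its sign is also false for a positive quasi-concave $g$. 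You defer the bad case to an unspecified ``homogeneity'' argument, so the central step is left open.

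\textbf{The missing fact.} The sign of $g$ at an interior critical point $s^*$ is tied to the type of extremum, and this follows from your own computation. The case $p=1$ is trivial since $g$ is affine, so take $p>1$ and $a_n,b_n>0$.
\begin{itemize}
\item $g'$ has the sign of $(a_n/b_n)^p(B/A)^{p-1}-1$, so $g'(s^*)=0$ means $B/A=(b_n/a_n)^{p/(p-1)}$.
\item $(B/A)^p=(\beta+b_n^ps)/(\alpha+a_n^ps)$ is a linear-fractional function of $s$. Hence $B/A$ is monotone, with limit $b_n/a_n$ as $s\to\infty$.
\item If $g$ is quasi-convex, $B/A$ is increasing, so $B/A<b_n/a_n$ at $s^*$. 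This forces $(b_n/a_n)^{1/(p-1)}<1$, hence $b_n<a_n$, hence $B/A<1$ at $s^*$. So the interior minimum of $g$ is positive and $|g|=g$ on $[0,1]$.
\item Symmetrically, in the quasi-concave case the interior maximum of $g$ is negative and $|g|=-g$.
\end{itemize}
Either way $|g|$ is quasi-convex on $[0,1]$, and your scheme goes through.

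A cleaner global version is also available. The map $(u,v)\mapsto|u^{1/p}-v^{1/p}|$ is quasi-convex on $\R_+^2$. Its sublevel set at level $c\ge0$ is $\{u\le h_c(v)\}\cap\{v\le h_c(u)\}$, where $h_c(v)=(c+v^{1/p})^p$ is concave. Compose it with the linear map $t\mapsto\bigl(\sum|a_n|^pt_n,\ \sum|b_n|^pt_n\bigr)$, where $t_n=\vk_n^p$. This shows $\xi_p$ is quasi-convex on every finite-dimensional cube, hence maximized at a vertex.
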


Using this proposition, we prove the following theorem.

\begin{thm}\label{thm:linear}
Let $\{(W_{\a}, w_{\a})\}_{\a \in A}$ be an arbitrary family of pointed metric spaces of cardinality $\# A \ge 2$, such that $\Rad(W_{\a}, w_{\a}) = 1$ for all $\a \in A$.
Let $$a = \{a_{\a}\}_{\a \in A}, a_{\a} \ge 0, \quad b = \{b_{\a}\}_{\a \in A}, b_{\a} \ge 0.$$
Let $1 \le p \le \infty$.
Set
$$(X, x_0) = (l^p) \bigvee_{\bt \in A}a_{\bt}(W_{\bt}, w_{\bt}), \
(Y, y_0) = (l^p) \bigvee_{\bt \in A}b_{\bt}(W_{\bt}, w_{\bt}).$$
Then for $1 \le p < \infty$ it holds that
$$2d_{pGH}\bigl((X, x_0), (Y, y_0)\bigr) \le
\sup_{\a \ne \bt} \max \biggl\{
|a_{\a} - b_{\a}| \ \diam(W_{\a}), \
\Bigl| (a_{\a}^p+a_{\bt}^p)^{1/p} - (b_{\a}^p+b_{\bt}^p)^{1/p} \Bigr|
\biggr\}, $$
and in the case $p = \infty$ it holds that
$$2d_{pGH}\bigl((X, x_0), (Y, y_0)\bigr) \le
\sup_{\a \ne \bt} \max \biggl\{
|a_{\a} - b_{\a}| \ \diam(W_{\a}), \
\Bigl| \max\{a_{\a}, a_{\bt}\} - \max\{b_{\a}, b_{\bt} \} \Bigr|
\biggr\}.$$
\end{thm}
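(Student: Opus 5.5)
The natural approach is the identity correspondence. Both $X$ and $Y$ are built on the same underlying set $\bigvee_{\bt} W_{\bt}$, since scaling does not change the points and the base points $w_{\bt}$ are identified in both. So we take
$$R = \bigl\{(w, w) \ \big| \ w \in \textstyle\bigvee_{\bt \in A} W_{\bt}\bigr\},$$
that is, the wedge sum of the diagonal correspondences $R_{\bt} = \{(w,w) \mid w \in W_{\bt}\}$. It contains $(x_0, y_0)$, hence $R \in \cR((X,x_0),(Y,y_0))$ and $2d_{pGH} \le \dis(R)$. We then bound $\dis(R)$ directly by splitting pairs of points $w, \tilde w$ according to the branches containing them. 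We do not route this through Theorem~\ref{thm:GH_upper_bound}, because that bound would only give $|a_\a - b_\a|\diam(W_\a)$-type terms combined in $l^p$, and these are weaker than the cross term we need.

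\emph{Same branch.} If $w, \tilde w \in W_\a$, then
$$|d_X(w,\tilde w) - d_Y(w,\tilde w)| = |a_\a - b_\a|\, d_{W_\a}(w,\tilde w) \le |a_\a - b_\a| \diam(W_\a).$$

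\emph{Different branches.} If $w \in W_\a$, $\tilde w \in W_\bt$ with $\a \ne \bt$, put $s = d_{W_\a}(w, w_\a) \in [0,1]$ and $t = d_{W_\bt}(\tilde w, w_\bt) \in [0,1]$; these lie in $[0,1]$ because $\Rad(W_\a, w_\a) = \Rad(W_\bt, w_\bt) = 1$. Then
$$\bigl|d_X(w,\tilde w) - d_Y(w,\tilde w)\bigr| = \Bigl| \bigl((a_\a s)^p + (a_\bt t)^p\bigr)^{1/p} - \bigl((b_\a s)^p + (b_\bt t)^p\bigr)^{1/p} \Bigr|,$$
which is exactly $\xi_p(\vk)$ from Proposition~\ref{prop:matan} applied to the two-term sequences $(a_\a, a_\bt)$ and $(b_\a, b_\bt)$ (padded by zeros) with $\vk = (s,t) \in [0,1]^2$. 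Proposition~\ref{prop:matan} reduces the supremum to $\vk \in \{0,1\}^2$. The three nontrivial choices give $|a_\a - b_\a|$, $|a_\bt - b_\bt|$ and $|(a_\a^p+a_\bt^p)^{1/p} - (b_\a^p+b_\bt^p)^{1/p}|$. For $p=\infty$ the same argument uses the second part of Proposition~\ref{prop:matan}, producing $|\max\{a_\a,a_\bt\} - \max\{b_\a,b_\bt\}|$ instead.

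The only real obstacle is absorbing the single-coordinate terms $|a_\a - b_\a|$ (from $\vk=(1,0)$) into the stated right-hand side, which contains $|a_\a-b_\a|\diam(W_\a)$ rather than $|a_\a-b_\a|$. There are two easy ways to handle this. The first uses $\diam(W_\a) \ge \Rad(W_\a,w_\a) = 1$ from the remark preceding Theorem~\ref{thm:GH_lower_bound}, so $|a_\a - b_\a| \le |a_\a - b_\a|\diam(W_\a)$. The second observes that the vertex $\vk = (1,0)$ corresponds to the pair $w, w_\a$ lying in the same branch $W_\a$, so it is already covered by the first case. Either way, taking the supremum over all pairs $\a \ne \bt$ (which covers every $\a$, since $\#A \ge 2$) yields
$$\dis(R) \le \sup_{\a\ne\bt}\max\Bigl\{|a_\a-b_\a|\diam(W_\a),\ \bigl|(a_\a^p+a_\bt^p)^{1/p}-(b_\a^p+b_\bt^p)^{1/p}\bigr|\Bigr\},$$
with the corresponding $\max$-expression for $p=\infty$. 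This is the claimed bound.
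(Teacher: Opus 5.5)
Your proof is correct and follows the paper's argument: your diagonal correspondence is the graph of the identity map $g\colon X\to Y$ used there, and you split pairs by branches and reduce the cross-branch supremum to the vertices of $[0,1]^2$ via Proposition~\ref{prop:matan}, exactly as the paper does. Your explicit absorption of the vertex terms $|a_\a-b_\a|$ via $\diam(W_\a)\ge\Rad(W_\a,w_\a)=1$, together with $\#A\ge 2$, is precisely what the paper's final equality uses without comment.
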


\begin{proof}
Let $1 \le p < \infty$ (the case $p = \infty$ is treated similarly).
Consider the identity map $g \: X \to Y, \ g(x) = x$.
Then
\begin{multline*}
2d_{pGH}\bigl((X, x_0), (Y, y_0)\bigr) \le \dis(g) = \\
\
= \sup_{w, \tilde{w} \in X}
\bigl| d_{X}(w, \tilde{w}) - d_{Y}(w, \tilde{w})\bigr|
= \sup_{\a, \bt} \sup_{\substack{w \in W_{\a} \\ \tilde{w} \in W_{\bt}}}
\bigl| d_{X}(w, \tilde{w}) - d_{Y}(w, \tilde{w})\bigr| = \\
\
= \max \biggl\{
\sup_{\a \ne \bt}
\sup_{\substack{w \in W_{\a} \\ \tilde{w} \in W_{\bt}}}
\bigl| d_{X}(w, \tilde{w}) - d_{Y}(w, \tilde{w})\bigr|, \
\sup_{\a \in A} \sup_{w, \tilde{w} \in W_{\a}}
\bigl| d_{X}(w, \tilde{w}) - d_{Y}(w, \tilde{w})\bigr|
\biggr\} = \\
\
= \max \Biggl\{
\sup_{\a \ne \bt}
\sup_{\substack{w \in W_{\a} \\ \tilde{w} \in W_{\bt}}}
\biggl| \Bigl(
\bigl(a_{\a}d_{W_{\a}}(w, w_{\a}) \bigr)^p +
\bigl(a_{\bt}d_{W_{\bt}}(\tilde{w}, w_{\bt}) \bigr)^p
\Bigr)^{1/p} - \\
- \Bigl( \bigl(b_{\a}d_{W_{\a}}(w, w_{\a}) \bigr)^p +
\bigl(b_{\bt}d_{W_{\bt}}(\tilde{w}, w_{\bt}) \bigr)^p
\Bigr)^{1/p}
\biggr|, \
\sup_{\a \in A} |a_{\a} - b_{\a}| \ \diam(W_{\a})
\Biggr\}
\end{multline*}

Since $\Rad(W_{\a}, w_{\a}) = 1$, the range of the distance function $\vk_{\a} = d_{W_{\a}}(w, w_{\a})$ lies in the interval $[0, 1]$.
Consequently, the supremum over $w \in W_{\a}, \ \tilde{w} \in W_{\bt}$ is majorized by the supremum over the variables $\vk_{\a}, \vk_{\bt} \in [0, 1]$. Thus, continuing our chain of inequalities and applying Proposition~\ref{prop:matan}, we obtain:

\begin{multline*}
2d_{pGH}\bigl((X, x_0), (Y, y_0)\bigr)
\le \max \Biggl\{
\sup_{\a \ne \bt} \
\sup_{0 \le \vk_{\a}, \vk_{\bt} \le 1} \
\biggl| \Bigl(
\bigl(a_{\a}\vk_{\a} \bigr)^p +
\bigl(a_{\bt}\vk_{\bt} \bigr)^p
\Bigr)^{1/p} - \\
- \Bigl( \bigl(b_{\a}\vk_{\a} \bigr)^p +
\bigl(b_{\bt}\vk_{\bt} \bigr)^p
\Bigr)^{1/p}
\biggr|, \
\sup_{\a \in A} |a_{\a} - b_{\a}| \ \diam(W_{\a})
\Biggr\} = \\
\
= \max \Biggl\{
\sup_{\a \ne \bt} \
\max \biggl\{  |a_{\a} - b_{\a}|, |a_{\bt} - b_{\bt}|,
\Bigl| (a_{\a}^p+a_{\bt}^p)^{1/p} - (b_{\a}^p+b_{\bt}^p)^{1/p} \Bigr|  \biggr\}, \
\sup_{\a \in A} |a_{\a} - b_{\a}| \ \diam(W_{\a})
\Biggr\} = \\
\
= \sup_{\a \ne \bt} \max \biggl\{
|a_{\a} - b_{\a}| \ \diam(W_{\a}), \
\Bigl| (a_{\a}^p+a_{\bt}^p)^{1/p} - (b_{\a}^p+b_{\bt}^p)^{1/p} \Bigr|
\biggr\}.
\end{multline*}
\end{proof}

\section{Isometric Embedding of $l^{\infty} \hookrightarrow \cGH$}

In this section, we consider the Banach space
$$l^{\infty} = \biggl\{ (x_{1}, x_{2}, x_{3}, \dots ) \ \Big| \ \sup_{n \in \N} |x_{n}| < \infty \biggr\}, \quad \|x\|_{l^{\infty}} = \sup_{n \in \N} |x_{n}|,$$
and prove the existence of an isometric embedding $l^{\infty} \hookrightarrow \cGH$.

Notice that for any pointed metric spaces $(X, x_{0}), (Y, y_{0})$ we have
$$d_{pGH}\bigl((X, x_0), (Y, y_{0})\bigr) \ge d_{GH}(X, Y),$$
where the right-hand side is the standard Gromov--Hausdorff distance that does not take the base point into account.
Thus, Theorem~\ref{thm:GH_upper_bound} established in Section~2 also bounds the standard Gromov--Hausdorff distance whenever the pointed Gromov--Hausdorff distances between the components of the wedge sum are known.

Recall that $\cP_{0}(\R_{+})$ denotes the space of nonempty subsets of the nonnegative half-line endowed with the Hausdorff distance.
Consider the subspace $\cP_{00}(\R_{+})$ consisting of sets containing the point $0$:
$$\cP_{00}(\R_{+}) = \bigl\{K\in \cP_{0}(\R_{+}) \mid 0\in K \bigr\}.$$
We now proceed to construct an isometric embedding of the space $\cP_{00}(\R_{+})$ into the class $\cGH$.

\begin{notation}
Let $\aleph$ be an arbitrary cardinal number.
Let $\Delta_{\aleph}$ be a metric space of cardinality $\aleph$ in which all nonzero distances are equal to $1$.
\end{notation}

\begin{rk}\label{rk:DeltaIso}
Let $\aleph$ be an arbitrary infinite cardinal.
Let $I$ be a set of cardinality $\aleph$.
Then the following spaces are isometric:
$$(l^{\infty}) \bigvee_{i \in I} \Delta_{\aleph} = \Delta_{\aleph}.$$
\end{rk}

\begin{proof}
Indeed, by definition of the $l^{\infty}$-wedge sum, the distance between any two points distinct from the base point equals $\max(1, 1) = 1$.
Thus, the wedge sum is a metric simplex of cardinality $\aleph \times \aleph = \aleph$.
\end{proof}

The following statement is well known.
\begin{ass}[\cite{BurBurIva}]
Let $X, Y$ be bounded metric spaces and let $\lambda > 0$.
Then
$$d_{GH}(\lambda X, \lambda Y) = \lambda d_{GH}(X, Y).$$
\end{ass}

Here and throughout this section, we will consider the classical Gromov--Hausdorff distance $d_{GH}$ between metric spaces (instead of the pointed distance $d_{pGH}$).
Even when discussing the distance between $l^p$-wedge sums, we will completely ignore the base point and treat the space as a standard metric space.

\begin{lem}
Let $\aleph \ge \mathfrak{c} = 2^{\aleph_0}$ be an arbitrary cardinal not less than the continuum.
To each nonempty set $C \subset \R_{+}$, we associate the $l^{\infty}$-wedge sum of metric simplices $\Delta_\aleph$\rom:
$$X_C = (l^{\infty}) \bigvee_{c \in C} c\Delta_\aleph.$$
Let $A, B \subset \R_{+}$ be arbitrary nonempty subsets.
Set $A_0 = \{0\} \cup A$ and $B_0 = \{0\} \cup B$.
Then the Gromov--Hausdorff distance between the spaces $X_A$ and $X_B$ is exactly equal to\/\rom:
$$d_{GH}(X_A, X_B) = \frac{1}{2} d_{\R_{+}}^{H}(A_0, B_0).$$
\end{lem}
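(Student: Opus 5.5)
The plan is to first recast $X_C$ as a labelled space and then prove the two inequalities separately. Let $\lambda\:X_C\to C\cup\{0\}$ assign to the base point the label $0$ and to every point of the branch $c\Delta_\aleph$ (other than the base point) the label $c$. By the definition of the $l^\infty$-wedge sum, for distinct $x,x'\in X_C$ we have
$$d_{X_C}(x,x')=\max\{\lambda(x),\lambda(x')\}.$$
This holds both inside one branch, where the distance is $c$, and across branches, where it is $\max\{c,c'\}$. A branch with $c=0$ degenerates to the base point. Every positive label $c\in C$ is carried by exactly $\aleph$ points, and the label $0$ by exactly one point. So $X_C$ is determined by the set $C_0=C\cup\{0\}$ of labels, and both inequalities reduce to statements about labels.

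\emph{Upper bound $2d_{GH}(X_A,X_B)\le d^H_{\R_+}(A_0,B_0)$.} Let $\e=d^H(A_0,B_0)$, fix $\delta>0$, and use Assertion~\ref{ass:correspondence} to obtain $R_0\in\cR(A_0,B_0)$ with $|a-b|\le\e+\delta$ for all $(a,b)\in R_0$. We may add $(0,0)$ to $R_0$ without spoiling this bound. For a pair $(a,b)\in R_0$ with $a,b>0$, the label $a$ occurs in at most $\#B_0\le\mathfrak c\le\aleph$ pairs. Since $\aleph\cdot\mathfrak c=\aleph$, we can partition the $\aleph$ points of label $a$ into pieces of cardinality $\aleph$, one for each such pair, and do the same on the $B$ side. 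Pairing the corresponding pieces by bijections gives a bijection between all positive-labelled points that arise in such pairs, and it satisfies $|\lambda(x)-\lambda(y)|\le\e+\delta$.

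Pairs of the form $(a,0)$ or $(0,b)$ with $a,b>0$ are handled by joining all points of label $a$ (respectively $b$) to the opposite base point; this is allowed because $a\le\e+\delta$ (respectively $b\le\e+\delta$). The resulting correspondence $R$ has distortion at most $\e+\delta$, which we check in two cases.
\begin{itemize}
\item For $(x,y),(x',y')\in R$ with $x\ne x'$ and $y\ne y'$, the map $(s,t)\mapsto\max\{s,t\}$ is $1$-Lipschitz in the $l^\infty$ norm, so the two distances differ by at most $\e+\delta$.
\item If exactly one side coincides, for instance $y=y'$ is a base point with $x\ne x'$, then both labels $\lambda(x),\lambda(x')$ are at most $\e+\delta$. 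Hence $|d(x,x')-0|\le\e+\delta$.
\end{itemize}
Letting $\delta\to0$ gives the upper bound.

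\emph{Lower bound $d^H(A_0,B_0)\le 2d_{GH}(X_A,X_B)$.} Take any $R\in\cR(X_A,X_B)$ and set $r=\dis(R)$; we show $\delta(A_0,B_0)\le r$, and the reverse direction is symmetric. The label $0$ is matched by $0\in B_0$. Now let $a\in A$, $a>0$, and consider two distinct points $x,x'$ of label $a$ together with some $y,y'$ such that $(x,y),(x',y')\in R$. There are two cases.
\begin{itemize}
\item If the points of label $a$ can be chosen so that $y=y'$, then $a=d(x,x')\le r$, so $a$ is within $r$ of $0\in B_0$.
\item Otherwise $y\ne y'$, and $|a-\max\{\lambda(y),\lambda(y')\}|\le r$. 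The maximum is itself a label, which lies in $B_0$.
\end{itemize}
Hence $\inf_{b\in B_0}|a-b|\le r$ in both cases. Taking the infimum over $R$ gives $d^H(A_0,B_0)\le 2d_{GH}(X_A,X_B)$.

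I expect the main obstacle to be the cardinal bookkeeping in the upper bound. The correspondence must be injective on positive-labelled points, because collapsing two distinct points of a large label onto one point would create distortion equal to that label. This is exactly where the hypothesis $\aleph\ge\mathfrak c$ is used, to absorb the up to continuum many partners of each label. Degenerate cases such as $0\in A$, or $R_0$ pairing $0$ with many labels, need to be checked separately but cause no real difficulty.
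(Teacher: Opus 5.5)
Your lower bound is correct and is essentially the paper's Step~1. All distances in $X_A$ lie in $A_0$ and all distances in $X_B$ lie in $B_0$, so two points at distance $a$ are matched with two points at some distance $d(y,y')\in B_0$. Your two cases can be merged, since $d(y,y)=0\in B_0$ as well.

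\textbf{The gap.} The upper bound fails as written, because of how you treat pairs of $R_0$ with a zero entry. You join \emph{all} points of label $b$ to $x_0$ whenever $(0,b)\in R_0$. The same points are also used up in the bijection coming from pairs $(a,b)\in R_0$ with $a,b>0$, and symmetrically for pairs $(a,0)$. So a non-base point $y$ of label $b$ can be paired both with $x_0$ and with a point $x$ of label $a>0$. The term $|d(x_0,x)-d(y,y)|=a$ then enters the distortion, and all you know is $a\le b+\e+\dl\le 2(\e+\dl)$.

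Your case ``exactly one side coincides'' tacitly assumes the repeated point is a base point, which your construction does not ensure. Concretely, take $A=\{2\}$, $B=\{1\}$, so that $d^{H}_{\R_{+}}(A_0,B_0)=1$. Then $R_0=\{(0,0),(0,1),(2,1)\}$ is an admissible choice, and your $R$ has distortion $2$, not $1+\dl$.

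\textbf{The repair.} Make every non-base point have exactly one partner. Partition the $\aleph$ points of label $a$ into pieces indexed by \emph{all} pairs $(a,b)\in R_0$, including $b=0$, and send only the piece belonging to $(a,0)$ to $y_0$; do the same on the $B$ side. Alternatively, first delete from $R_0$ every pair $(a,0)$ or $(0,b)$ whose positive entry also has a positive partner. This keeps $R_0$ a correspondence, since $(0,0)\in R_0$. Either way, a repeated point can only be $x_0$ or $y_0$, all of whose partners have labels at most $\e+\dl$, and your estimate goes through.

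This is exactly what the paper's Step~2 does. It rewrites $X_A=(l^\infty)\bigvee_{(a,b)\in R_0}a\Delta_\aleph$ and $X_B=(l^\infty)\bigvee_{(a,b)\in R_0}b\Delta_\aleph$ via Remark~\ref{rk:DeltaIso}, so every pair, including those with a zero entry, gets its own branch. It then applies Theorem~\ref{thm:GH_upper_bound} for $p=\infty$ instead of checking the distortion by hand.
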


\begin{proof}
For brevity, denote $X = X_A$ and $Y = X_B$.
Recall the definition of the Gromov--Hausdorff distance:
$$d_{GH}(X, Y) = \frac{1}{2} \inf_{\substack{f: X \to Y \\ g: Y \to X}} \max \bigl\{ \dis(f), \ \dis(g), \  \codis(f, g) \bigr\}.$$
We divide the proof of the assertion into two steps.

\paragraph{Step 1.} We show that $d_{GH}(X, Y) \ge \frac{1}{2} d^{H}_{\R_{+}}(A_0, B_0)$.

Let $d^{H}_{\R_{+}}(A_0, B_0) < \infty$ (the case where this is not so is trivial). By definition of the Hausdorff distance:
$$d^{H}_{\R_{+}}(A_0, B_0) = \max \left\{ \sup_{a \in A_0} \inf_{b \in B_0} |a - b|, \, \sup_{b \in B_0} \inf_{a \in A_0} |a - b| \right\}.$$
Fix an arbitrary $\e > 0$. Without loss of generality, assume that there exists a number $a_0 \in A_0$ such that for all $b \in B_0$ the inequality $|a_0 - b| \ge d^H_{\R_{+}}(A_0, B_0) - \e$ holds (if not, swap the sets $A$ and $B$ and proceed with the proof).

Let us verify that the sets $A_0$ and $B_0$ contain all values of distances between any pairs of points in $X$ and $Y$, respectively:
$$A_{0} = \bigl\{d_{X}(x_1, x_2) \mid x_1, x_2 \in X\bigr\}, \quad
B_{0} = \bigl\{d_{Y}(y_1, y_2) \mid y_1, y_2 \in Y\bigr\}.$$
We justify the equality for $A_0$ (the argument for $B_0$ is similar).
Let $x, \tilde{x} \in X_A = (l^\infty) \vee_{c \in A} c\Delta_\aleph$. By definition of the wedge sum metric:
\begin{itemize}
\item
if $x, \tilde{x}$ belong to the same component $c\Delta_\aleph$, then $d_X(x, \tilde{x}) = c \cdot d_{\Delta_\aleph}(x, \tilde{x})$. Since distances in a metric simplex are only $0$ or $1$, we have $d_X(x, \tilde{x}) \in \{0, c\} \subset A_0$;
\item
if $x \in c_1\Delta_\aleph$ and $\tilde{x} \in c_2\Delta_\aleph$ with $c_1 \neq c_2$, then $d_X(x, \tilde{x}) = \max\{d_{c_1\Delta_\aleph}(x, x_0), d_{c_2\Delta_\aleph}(\tilde{x}, x_0)\} = \max\{c_1, c_2\}$. Since $c_1, c_2 \in A$, their maximum also belongs to $A \subset A_0$.
\end{itemize}
Thus, all pairwise distances in $X$ constitute precisely the set $A_0$.

Now consider an arbitrary map $f: X \to Y$.
Let points $x_1, x_2 \in X$ be such that $d_{X}(x_1, x_2) = a_0$.
Set $b^{*} := d_{Y}(f(x_1), f(x_2)) \in B_0$.
Consequently,
$$\dis(f) \ge |a_0 - b^*| \ge \inf_{b \in B_0} |a_0 - b| \ge d^{H}_{\R_{+}}(A_0, B_0) - \e.$$
Since $\e > 0$ is arbitrary, we obtain the lower bound:
$$d_{GH}(X, Y) \ge \frac{1}{2} \dis(f) \ge \frac{1}{2} d^{H}_{\R_{+}}(A_0, B_0).$$

\paragraph{Step 2.} We show that, conversely, $d_{GH}(X, Y) \le \frac{1}{2} d^{H}_{\R_{+}}(A_0, B_0)$.

The geometric idea behind the proof of the upper bound is to `expand' the original wedge sums according to Remark~\ref{rk:DeltaIso}.
Instead of directly comparing the spaces $X$ and $Y$, we construct auxiliary $l^{\infty}$-wedge sums isometric to the original ones, but indexed by pairs $(a, b)$ from the correspondence $R_0$. In such a space, the components with indices $a$ and $b$ turn out to be rigidly linked to each other, allowing one to easily construct the desired correspondence.

Now, fix $\e > 0$ and define the set $R_0 \subset A_0 \times B_0$ as follows:
$$R_0 = \bigl\{ (a, b) \in A_0 \times B_0 : |a - b| \le d^{H}_{\R_{+}}(A_0, B_0) + \e \bigr\}.$$
Proposition~\ref{ass:correspondence} implies that the projections of $R_0$ onto $A_0$ and $B_0$ are surjective, that is, $R_0 \in \cR(A_0, B_0)$.
Now we can rewrite (up to isometry) the original spaces via this correspondence as follows:
$$X = (l^{\infty}) \bigvee_{(a,b) \in R_0} a\Delta_\aleph, \quad Y = (l^{\infty}) \bigvee_{(a,b) \in R_0} b\Delta_\aleph.$$
Then by Theorem~\ref{thm:GH_upper_bound}, we obtain:
$$d_{GH}(X, Y) \le d_{pGH}(X, Y) \le \sup_{(a,b) \in R_0} d_{pGH}(a\Delta_\aleph, b\Delta_\aleph) = \sup_{(a,b) \in R_0} \frac{|a - b|}{2} \le \frac{1}{2} \bigl( d^{H}_{\R_{+}}(A_0, B_0) + \e \bigr),$$
where the equality $d_{pGH}(a \Delta_{\aleph}, b \Delta_{\aleph}) = d_{GH}(a \Delta_{\aleph}, b \Delta_{\aleph}) = |a-b|/2$ holds because the base point of the simplex is equivalent to any other vertex.
Letting $\e \to 0^+$, we obtain the required upper bound $d_{GH}(X, Y) \le \frac{1}{2} d^{H}_{\R_{+}}(A_0, B_0)$.

Combining Steps~1 and~2, we arrive at the equality.
\end{proof}

Thus, the following theorem is proved.

\begin{thm}
Let $\aleph$ be a cardinal not less than the continuum.
Then there exists an isometric embedding\/\rom:
$$\cP_{00}(\R_{+}) \hookrightarrow \cGH, \quad A \mapsto (l^{\infty}) \bigvee_{a \in A} 2a \Delta_{\aleph}.$$
\end{thm}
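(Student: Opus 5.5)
The theorem follows from the preceding Lemma together with the scaling assertion from \cite{BurBurIva}. Define $\Phi(A) = (l^{\infty}) \bigvee_{a \in A} 2a\Delta_{\aleph}$ for $A \in \cP_{00}(\R_{+})$.

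The first step is to observe that $\Phi(A)$ coincides with $X_{2A}$ in the notation of the Lemma, where $2A = \{2a : a \in A\}$. Indeed, the components of $X_{2A}$ are exactly $c\Delta_{\aleph}$ with $c = 2a$, $a \in A$, and the indexing by $a$ versus $2a$ is a bijection.

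The second step uses the hypothesis $0 \in A$ and $0 \in B$. It gives $(2A)_0 = \{0\} \cup 2A = 2A$, and similarly $(2B)_0 = 2B$. The Lemma then yields
$$d_{GH}\bigl(\Phi(A), \Phi(B)\bigr) = \frac{1}{2} d^{H}_{\R_{+}}(2A, 2B).$$

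The third step uses that the Hausdorff distance on $\R_{+}$ is homogeneous under the dilation $t \mapsto 2t$. This is immediate from the sup--inf formula, since $|2a - 2b| = 2|a-b|$. Hence $d^{H}_{\R_{+}}(2A, 2B) = 2 d^{H}_{\R_{+}}(A, B)$, and therefore $d_{GH}(\Phi(A), \Phi(B)) = d^{H}_{\R_{+}}(A,B)$. So $\Phi$ preserves distances, including the value $\infty$ for unbounded sets, consistently with the Lemma.

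Finally, one has to make sense of the word ``embedding'' between spaces with generalized pseudometrics. $\Phi$ is defined on representatives, and $\cGH$ is taken up to isometry. Distance preservation already implies that $\Phi$ is injective modulo zero distance on both sides, and the zero-distance classes on the two sides correspond to each other. This interpretive point is the only real obstacle: $d^H$ on $\cP_{00}(\R_+)$ is only a pseudometric, since $A$ and its closure are at distance zero. I would therefore state that the isometric embedding is understood in the metric-quotient sense, or equivalently as a distance-preserving map. With that convention the computation above completes the proof.
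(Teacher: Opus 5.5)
Your proposal is correct and is essentially the paper's argument: the paper proves this theorem with the single line ``Thus, the following theorem is proved'' after the Lemma. You read it off the Lemma in the same way, using $0 \in A$ to get $(2A)_0 = 2A$ and letting the dilation by $2$ cancel the factor $\frac{1}{2}$. You absorb that factor through the homogeneity $d^{H}_{\R_{+}}(2A, 2B) = 2\, d^{H}_{\R_{+}}(A, B)$ rather than the Gromov--Hausdorff scaling assertion you name in your opening sentence, and that is slightly cleaner, because the paper states the scaling assertion only for bounded spaces while $\Phi(A)$ may be unbounded.
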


\begin{notation}
For any number $a \in \R_{+}$, set $a_{+} := \max\{a, 0\}$.
\end{notation}

\begin{lem}
There exists an isometric embedding
$l^{\infty} \hookrightarrow \cP_{00}(\mathbb{R}_{+}).$
\end{lem}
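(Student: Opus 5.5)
We need an isometric embedding of $l^\infty$ into $\cP_{00}(\R_+)$ with the Hausdorff distance. The notation $a_+=\max\{a,0\}$ suggests what to use.

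\textbf{Step 1: reduce to bounded sequences of nonnegative numbers.} For $x=(x_1,x_2,\dots)\in l^\infty$, the map $x\mapsto x+M\cdot\mathbf 1$ is not canonical, since no uniform bound $M$ exists. Instead we encode each coordinate in its own disjoint window of $\R_+$. Fix a sequence of disjoint intervals $I_n=[L_n,L_n+\ell_n]$ with $L_n\to\infty$ rapidly. Taking all window lengths equal would fail because $l^\infty$ is unbounded, so we need a better device.

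\textbf{Step 2: the construction.} Define
$$\Phi(x)=\{0\}\cup\bigcup_{n\in\N}\Bigl([\,2nT,\ 2nT+T\,]\ \cup\ \bigl\{(2n+1)T+x_n \bigr\}\Bigr)$$
for a large gap $T$. This is again unbounded-sensitive. The cleaner choice is a pair of points per coordinate, built with the positive and negative parts. We represent $x_n=(x_n)_+-(-x_n)_+$ and set
$$\Phi(x)=\{0\}\cup\bigcup_{n}\bigl\{\,c_n+(x_n)_+,\ d_n+(-x_n)_+\bigr\}\cup K,$$
where $K$ is a fixed closed "scaffold" set. The scaffold is chosen so that any point farther from a tagged point than its tag difference is bounded away by it. Concretely, take $K=\R_+\setminus\bigcup_n (c_n-1, c_n+\infty)$, which is absurd. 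So the honest approach is to let the scaffold be a fixed set $K$ whose complement consists of gaps $G_n=(g_n,g_n')$. Inside gap $G_n$ we place a single point at distance exactly $|x_n|$-dependent from an end. The Hausdorff distance between $\Phi(x)$ and $\Phi(y)$ is then $\sup_n$ of the distances between the inserted points, provided each such distance is less than the distance from the inserted point to the scaffold. This requires gaps of length at least $2\|x\|+$. Since the gaps must not depend on $x$, I would make the gaps infinite by dropping the scaffold. Let
$$\Phi(x)=\{0\}\cup\{\,t_n+x_n : n\in\N\}$$
with $t_n=4^n$. This is not $l^\infty$-isometric for large $\|x\|$, since points may get matched to other indices.

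\textbf{Main obstacle and the way around it.} The matching across indices is the crux. I would instead avoid lattices altogether and use the Hausdorff metric on translates. Every $x\in l^\infty$ is at finite distance from $0$, so it suffices to embed isometrically using an unbounded family. The key trick is that $\sup_n|x_n-y_n|=d^H(\{x_n\}\text{-graph})$ only if the indices are forced. I would force the indices by making the index coordinate dominate: map $x$ to $\{0\}\cup\{\,2\|\cdot\|\ldots\}$. Failing that, I would use the standard fact that $l^\infty$ embeds isometrically into $\cP_0(\R_+)$ via $x\mapsto\bigcup_n [\,s_n,\ s_n+\ldots]$. I expect this step, choosing a construction that prevents cross-index matching for unbounded norms, to be the main difficulty. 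My first attempt would be sets of the form $\bigcup_n\bigl[(x_n)_+ + n\lambda,\infty\bigr)$-type tails, intersected with half-lines, so that the Hausdorff distance is computed coordinatewise via Assertion~\ref{ass:correspondence}.
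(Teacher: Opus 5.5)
Your proposal does not reach a proof. You correctly identify the obstacle. With one copy of each coordinate at a fixed position such as $4^n$, a point of $\Phi(x)$ can be matched to a point coming from a different index once $\|x-y\|_{l^\infty}$ exceeds the local spacing, and since $l^\infty$ is unbounded, no $x$-independent spacing prevents this. None of the constructions you end with overcomes it:
\begin{itemize}
\item The final suggestion, a union of tails $\bigcup_n[(x_n)_+ + n\lambda,\infty)$, is just the single half-line $[\min_n((x_n)_+ + n\lambda),\infty)$. So the map is not even injective, and using $(x_n)_+$ alone already identifies all sequences with nonpositive entries.
\item The two-point encoding by positive and negative parts is not isometric either: for $x_n=1$, $y_n=-1$ each tagged point moves by $1$, while $|x_n-y_n|=2$.
\item Invoking a ``standard fact'' that $l^\infty$ embeds isometrically into $\cP_0(\R_+)$ assumes what is to be proved.
\end{itemize}

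The missing idea is repetition: encode every coordinate infinitely many times, at positions whose gaps tend to infinity. The paper fixes a bijection $k\colon\N\times\N\to\N$ and sets $K_x=\{0\}\cup\{(2^{k(n,m)}+x_n)_+ : n,m\in\N\}$.

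For the upper bound $d^H_{\R_+}(K_x,K_y)\le\|x-y\|_{l^\infty}$, use the index-matching correspondence $\{(0,0)\}\cup\{((2^{k(n,m)}+x_n)_+,(2^{k(n,m)}+y_n)_+)\}$, the $1$-Lipschitz property of $t\mapsto t_+$, and Assertion~\ref{ass:correspondence}. Repetition costs nothing here.

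For the lower bound, take an index $n_0$ with $|x_{n_0}-y_{n_0}|>\|x-y\|_{l^\infty}-\e$. Pick $m_0$ so that $2^{k(n_0,m_0)}\ge 100\max\{\|x-y\|_{l^\infty},\|x\|_{l^\infty},\|y\|_{l^\infty}\}$; this is possible because $m\mapsto k(n_0,m)$ is injective, hence unbounded. Set $a^*=2^{k(n_0,m_0)}+x_{n_0}$. Then $b^*=2^{k(n_0,m_0)}+y_{n_0}$ is the only point of $K_y$ within $2\|x-y\|_{l^\infty}$ of $a^*$, so $d^H_{\R_+}(K_x,K_y)\ge\delta(K_x,K_y)\ge|x_{n_0}-y_{n_0}|$.

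Thus the spacing never has to depend on $x$. For each pair only one sufficiently isolated copy is needed, and the infinitely many copies guarantee that one exists. Your single-copy constructions cannot supply such a copy.
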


\begin{proof}
Let $x = \{x_n\}_{n=1}^{\infty} \in l^{\infty}$ be an arbitrary bounded sequence of real numbers. By definition of the space $l^\infty$, there exists a constant $M > 0$ such that $|x_n| \le M$ for all $n \in \N$.
Fix a bijection (renumbering) $k: \N \times \N \to \N$ assigning a natural number $k(n, m)$ to each pair of indices $(n, m)$. For each $k \in \mathbb{N}$, consider the family of intervals of the form:
$$ I_k = \bigl[2^k - k, \, 2^k + k\bigr]. $$
To each sequence $x \in l^\infty$, we associate the set $K_x \subset \mathbb{R}_+$ as follows:
$$ K_{x} = \{0\} \cup \biggl\{ \Bigl(2^{k(n,m)} + x_n\Bigr)_{+} \ \Big| \ n, m \in \N \biggr\}. $$

The rationale behind this construction of the set $K_x$ is that for sufficiently large $k$, the quantity $2^k + x_n$ is strictly positive; therefore, the operation $(\cdot)_+$ ensures that $K_x \subset \R_+$.
The intervals $I_k$ are chosen such that for large $k$ they are mutually disjoint for distinct $k$, which allows one to unambiguously `isolate' the components of the sequence $x$ within the set $K_x$ from each other.

The map $x \mapsto K_x$ is well-defined as a map $l^{\infty} \to \cP_{00}(\R_{+})$.
We now demonstrate that the map $x \mapsto K_x$ is an isometric embedding.

\paragraph{Step 1.} We prove the upper bound $d^{H}_{\R_{+}}(K_x, K_y) \le \|x - y\|_{l^\infty}$.

Set
$$R_{0} = \bigl\{(0, 0)\bigr\} \cup \biggl\{ \Bigl( \bigl( 2^{k(n,m)} + x_{n}\bigr)_{+}, \ \bigl( 2^{k(n,m)} + y_{n}\bigr)_{+} \Bigr) \ \Big| \ n, m \in \N \ \biggr\}.$$
By definition of the sets $K_x$ and $K_y$, we have $R_0 \in \cR(K_x, K_y)$.
Using the elementary property of real numbers $|\alpha_{+} - \beta_{+}| \le |\alpha - \beta|$, that is, the $1$-Lipschitz property of the map $\alpha \mapsto \alpha_+$, we obtain:
$$\sup_{(a, b) \in R_{0}} |a - b| = \sup_{n, m \in \N} \ \Bigl|\bigl(2^{k(n, m)} + x_{n}\bigr)_{+} - \bigl(2^{k(n, m)} + y_{n}\bigr)_{+}\Bigr| \le \sup_{n \in \N} |x_{n} - y_{n}| = \|x - y\|_{l^{\infty}}.$$
Consequently, by Proposition~\ref{ass:correspondence},
$$d^{H}_{\R_{+}}(K_x, K_y) \le \sup_{(a, b) \in R_{0}} |a - b| \le \|x - y\|_{l^{\infty}}.$$

\paragraph{Step 2.} We prove the lower bound $d^{H}_{\R_{+}}(K_x, K_y) \ge \|x - y\|_{l^\infty}$.

If necessary, we increase the constant $M > 0$ so that
$$M \ge \max \bigl\{ \|x\|_{l^{\infty}}, \|y\|_{l^{\infty}} \bigr\}$$ holds.
Fix $\e > 0$.
By definition of the supremum for the norm in $l^\infty$, there exists an index $n_0 \in \N$ such that:
$$ |x_{n_0} - y_{n_0}| > \|x - y\|_{l^\infty} - \e. $$
Choose a number $m_0 \in \N$ such that
$$2^{k(n_0, m_0)} \ge 100 \max \bigl\{ \|x-y\|_{l^{\infty}}, \ M \bigr\}.$$

Set $a^{*} = 2^{k(n_0,m_0)} + x_{n_0} \in K_{x}$. Notice that the interval
$$ J = \bigl[ a^{*} - 2\|x-y\|_{l^{\infty}}, \ a^{*} + 2\|x-y\|_{l^{\infty}} \bigr] $$
contains the point $b^{*} = 2^{k(n_0,m_0)} + y_{n_0} \in K_y$ since
$$ |a^{*} - b^{*}| = |x_{n_0} - y_{n_0}| \le \|x-y\|_{l^{\infty}}. $$

Let us show that no other points of the set $K_y$ belong to this interval, that is, the following equality holds:
\begin{equation}\label{eq:intersection_single}
\bigl[ a^{*} - 2\|x-y\|_{l^{\infty}}, \ a^{*} + 2\|x-y\|_{l^{\infty}} \bigr] \cap K_{y} = \bigl\{ 2^{k(n_0, m_0)} + y_{n_0} \bigr\}.
\end{equation}
Indeed, consider an arbitrary element $b = 2^{k(n, m)} + y_n \in K_y \setminus \{b^{*}\}$, where $(n, m) \neq (n_0, m_0)$. For brevity, denote $k_0 = k(n_0, m_0)$ and $k = k(n, m)$. Two cases are possible.
\begin{itemize}
\item
If $k < k_0$, then by the choice of the number $m_0$ we have:
$$ b \le 2^{k_0-1} + M < 2^{k_0} - 2\|x-y\|_{l^{\infty}} - M \le a^{*} - 2\|x-y\|_{l^{\infty}}, $$
where the upper bound on $b$ and the lower bound on the left endpoint of the interval $J$ are guaranteed by the condition $2^{k_0} \ge 100 \max\{\|x-y\|_{l^\infty}, M\}$. Consequently, $b < \inf J$.
\item
If $k > k_0$, we similarly obtain:
$$ b \ge 2^{k_0+1} - M > 2^{k_0} + 2\|x-y\|_{l^{\infty}} + M \ge a^{*} + 2\|x-y\|_{l^{\infty}}, $$
which implies that $b > \sup J$.
\end{itemize}

Thus, for any $b \in K_y \setminus \{b^{*}\}$ we have $|a^{*} - b| > 2\|x-y\|_{l^{\infty}}$, which proves equality~\eqref{eq:intersection_single}.

From the resulting equality it directly follows that
$$ d_{\mathbb{R}_+}^{H}(K_x, K_y) \ge \delta(K_x, K_y) \ge \inf_{b \in K_y} |a^{*} - b| = |a^{*} - b^{*}| = |x_{n_0} - y_{n_0}| > \|x-y\|_{l^{\infty}} - \e. $$
Taking the limit as $\e \to 0$, we obtain the required lower bound $d_{\mathbb{R}_+}^{H}(K_x, K_y) \ge \|x-y\|_{l^{\infty}}$.

Combining the results of both steps, we arrive at the equality
$$d^{H}_{\R_{+}}(K_x, K_y) = \|x - y\|_{l^\infty}.$$
\end{proof}

Thus, the following theorem is proved.

\begin{thm}
There exists an isometric embedding
$l^{\infty} \hookrightarrow \cGH$.
\end{thm}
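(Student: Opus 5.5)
The plan is to compose the two isometric embeddings just constructed. By the preceding lemma, the map $x \mapsto K_x$ is an isometric embedding $l^{\infty} \hookrightarrow \cP_{00}(\R_{+})$, with
$$d^{H}_{\R_{+}}(K_x, K_y) = \|x - y\|_{l^{\infty}}.$$
By the preceding theorem, for a fixed cardinal $\aleph \ge \mathfrak{c}$ the map
$$\Phi\colon \cP_{00}(\R_{+}) \to \cGH, \qquad K \mapsto (l^{\infty}) \bigvee_{c \in K} 2c\,\Delta_{\aleph},$$
is an isometric embedding. The candidate embedding is therefore $x \mapsto \Phi(K_x)$.

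First I check that $\Phi$ is isometric with the right normalization. Write $2K = \{2c : c \in K\}$, so that $\Phi(K) = X_{2K}$ in the notation of the lemma on $X_C$. Since $0 \in K$, we have $(2K)_0 = 2K$. The lemma then gives
$$d_{GH}\bigl(\Phi(K), \Phi(L)\bigr) = \tfrac12\, d^{H}_{\R_{+}}(2K, 2L) = d^{H}_{\R_{+}}(K, L).$$
The second equality holds because the Hausdorff distance scales linearly under the dilation $t \mapsto 2t$ of $\R_{+}$.

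Composing the two maps gives
$$d_{GH}\bigl(\Phi(K_x), \Phi(K_y)\bigr) = d^{H}_{\R_{+}}(K_x, K_y) = \|x - y\|_{l^{\infty}}$$
for all $x, y \in l^{\infty}$.

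Next I verify injectivity, that is, that distinct points of $l^{\infty}$ go to distinct elements of $\cGH$. If $x \ne y$, the distance computed above is positive, so the images are not isometric. Hence the composition is an isometric embedding $l^{\infty} \hookrightarrow \cGH$.

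I expect the only delicate point to be a set-theoretic one: the $\Phi(K_x)$ must be legitimate members of the class $\cGH$. Each $\Phi(K_x)$ is a set of cardinality at most $\aleph \cdot \aleph = \aleph$, because $K_x$ is countable. So one can realize all these spaces on subsets of a single fixed set of cardinality $\aleph$, for instance $\aleph = \mathfrak{c}$. Beyond this, the argument is direct bookkeeping with results already established in the paper.
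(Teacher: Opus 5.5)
Your proposal is correct and matches the paper's argument: the theorem is obtained by composing the embedding $x \mapsto K_x$ of $l^{\infty}$ into $\mathcal{P}_{00}(\mathbb{R}_{+})$ with the embedding $K \mapsto (l^{\infty})\bigvee_{c \in K} 2c\,\Delta_{\aleph}$ into the Gromov--Hausdorff class. Your check of the factor $2$ via the $X_C$ lemma (using $0 \in K$), and your cardinality remark, make explicit bookkeeping that the paper leaves implicit.
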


\begin{cor}
Every separable metric space $X$ isometrically embeds into $\cGH$.
\end{cor}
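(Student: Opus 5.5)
The corollary follows by composing two isometric embeddings: every separable metric space into $l^{\infty}$, and $l^{\infty}$ into $\cGH$ (the preceding theorem). The second is already available, so the work is the classical Fréchet embedding of a separable metric space into $l^{\infty}$, together with a check that a composition of isometric embeddings is again isometric.

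First, if $X=\emptyset$ there is nothing to prove. Otherwise fix a countable dense subset $\{q_n\}_{n\in\N}\subset X$, repeating points if $X$ is finite, and a base point $x_0\in X$. Define $\Phi\:X\to l^{\infty}$ by
$$\Phi(x)=\bigl(d_X(x,q_n)-d_X(q_n,x_0)\bigr)_{n\in\N}.$$
By the triangle inequality each coordinate is bounded by $d_X(x,x_0)$, so $\Phi(x)\in l^{\infty}$; subtracting $d_X(q_n,x_0)$ is exactly what makes this work for unbounded $X$. Next, for $x,y\in X$ the $n$-th coordinate of $\Phi(x)-\Phi(y)$ equals $d_X(x,q_n)-d_X(y,q_n)$. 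Its absolute value is at most $d_X(x,y)$, so $\|\Phi(x)-\Phi(y)\|_{l^{\infty}}\le d_X(x,y)$.

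The main (mild) step is the reverse inequality, and it uses density. Take $q_{n_j}\to y$. Then
$$|d_X(x,q_{n_j})-d_X(y,q_{n_j})|\to d_X(x,y),$$
so the supremum of the coordinates is at least $d_X(x,y)$. Hence $\Phi$ is an isometric embedding.

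Finally, let $\Psi\:l^{\infty}\hookrightarrow\cGH$ be the embedding from the previous theorem, given explicitly by $x\mapsto(l^{\infty})\bigvee_{a\in K_x}2a\Delta_{\aleph}$. Then
$$d_{GH}\bigl(\Psi\Phi(x),\Psi\Phi(y)\bigr)=\|\Phi(x)-\Phi(y)\|_{l^{\infty}}=d_X(x,y),$$
so $\Psi\circ\Phi$ is the required isometric embedding $X\hookrightarrow\cGH$.
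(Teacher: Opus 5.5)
Your proposal is correct and follows the paper's proof exactly: you compose the Kuratowski (Fréchet) embedding $x\mapsto\bigl(d_X(x,q_n)-d_X(q_n,x_0)\bigr)_{n}$ into $l^{\infty}$ with the embedding $l^{\infty}\hookrightarrow\cGH$ from the preceding theorem. You also check that this map lands in $l^{\infty}$ and preserves distances, which the paper only asserts, and that check is sound.
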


\begin{proof}
There exist isometric embeddings $X \hookrightarrow l^{\infty}$ (the Kuratowski embedding) and $l^{\infty} \hookrightarrow \cGH$.

The Kuratowski embedding is constructed as follows.
Let $\{z_{n}\}_{n \in \N} \subset X$ be a countable dense subset.
Choose a base point $x_0 \in X$.
Then the map
$x \mapsto \bigl(d_{X}(x, z_n) - d_{X}(x_0, z_n)\bigr)_{n=1}^{\infty}$ defines an isometric embedding $X \hookrightarrow l^{\infty}$.
\end{proof}

 \end{document}